\newtheorem{theorem}{Theorem}[section]
\newtheorem*{theorem1}{Theorem}
\newtheorem*{theorem2}{Theorem}
\newtheorem{proposition}[theorem]{Proposition}
\newtheorem{lemma}[theorem]{Lemma}
\newtheorem{question}[theorem]{Question}
\newtheorem*{claim}{Claim}
\newtheorem{corollary}[theorem]{Corollary}
\newtheorem{observation}[theorem]{Observation}
\theoremstyle{definition}
\newcommand{\Q}{\mathbb{Q}}
\newcommand{\R}{\mathbb{R}}
\newcommand{\N}{\mathbb{N}}
\newcommand{\CC}{\mathbb{C}}
\renewcommand{\AA}{\mathbb{A}}
\newcommand{\F}{\mathcal{F}}
\newcommand{\I}{\mathcal{I}}
\newcommand{\M}{\mathcal{M}}
\newcommand{\NN}{\mathcal{N}}
\newcommand{\explicitSet}[1]{\left\lbrace #1 \right\rbrace}
\newcommand{\brackets}[1]{\left\langle #1 \right\rangle}
\newcommand{\set}[2]{\explicitSet{#1 \colon #2}}
\newcommand{\seq}[2]{\brackets{#1 \colon #2}}
\newcommand{\tr}[1]{\langle\hspace{-.8mm}\langle #1 \rangle\hspace{-.8mm}\rangle}
\newcommand{\norm}[1]{|\hspace{-.15mm}| #1 |\hspace{-.15mm}|}
\newcommand{\bignorm}[1]{\big|\hspace{-.4mm}\big| #1 \big|\hspace{-.4mm}\big|}
\newcommand{\closure}[1]{\overline{#1}}
\renewcommand{\a}{\alpha}
\newcommand{\dlt}{\delta}
\newcommand{\e}{\varepsilon}
\renewcommand{\k}{\kappa}
\newcommand{\w}{\omega}
\newcommand{\0}{\emptyset}
\newcommand{\sub}{\subseteq}
\newcommand{\rest}{\!\restriction\!}
\newcommand{\cf}{\mathrm{cf}}
\newcommand{\ch}{\ensuremath{\mathsf{CH}}\xspace}
\newcommand{\zfc}{\ensuremath{\mathsf{ZFC}}\xspace}
\newcommand{\ssh}{\ensuremath{\mathsf{SSH}}\xspace}
\newcommand{\bdd}{\mathfrak{b}}
\newcommand{\continuum}{\mathfrak{c}}
\newcommand{\bd}{\mathbf{d}}
\newcommand{\be}{\mathbf{e}}
\newcommand{\bx}{\mathbf{x}}
\newcommand{\by}{\mathbf{y}}
\newcommand{\bz}{\mathbf{z}}
\newcommand{\barr}{\mathfrak{y}}
\begin{document}

\title{Small-dimensional normed barrelled Spaces}

\author{Will Brian}
\address{
W. R. Brian\\
Department of Mathematics and Statistics\\
University of North Carolina at Charlotte\\
Charlotte, NC, USA}
\email{wbrian.math@gmail.com}
\urladdr{wrbrian.wordpress.com}

\author{Christopher Stuart}
\address{C. E. Stuart\\
Department of Mathematical Sciences\\
New Mexico State University\\
Las Cruces, NM, USA}
\email{cstuart@nmsu.edu}

\subjclass[2020]{46A08, 46B26, 03E17}
\keywords{barrelled spaces, Banach spaces, meager sets, null sets}

\thanks{The first author is supported in part by NSF grant DMS-2154229. }

\maketitle

\begin{abstract}
We prove that every separable Banach space has a barrelled subspace with algebraic dimension $\mathrm{non}(\mathcal M)$, which denotes the smallest cardinality of a non-meager subset of $\mathbb R$. 
This strengthens a theorem of Sobota. 
More generally, we prove that every Banach space with density character $\kappa$ contains a barrelled subspace with algebraic dimension $\mathrm{cf}[\kappa]^\omega \cdot \mathrm{non}(\mathcal M)$, and in particular it is consistent with $\mathsf{ZFC}$ that every Banach space with density character $<\!\mathfrak{c}$ has a barrelled subspace with dimension $<\!\mathfrak{c}$. 

We also prove that if the dual of a Banach space contains either $c_0$ or $\ell^p$ for some $p \geq 1$, then that space does not have a barrelled subspace with dimension $<\!\mathrm{cov}(\mathcal N)$, which denotes the smallest cardinality of a collection of Lebesgue null sets covering $\mathbb R$. In particular, it is consistent with $\mathsf{ZFC}$ that no classical Banach spaces contain barrelled subspaces with dimension $\mathfrak{b}$. This partly answers a question of S\'anchez Ruiz and Saxon.
\end{abstract}

\section{Introduction}

A classical theorem of Mazur states that every infinite-dimensional Fr\'echet space has dimension at least $\continuum = |\R|$ (see \cite[p. 76]{BPC} or \cite{PT}). In other words, 
$$\continuum \,=\, \min \set{\k}{\text{there is a Fr\'echet space of infinite dimension }\k}.$$
Here, and throughout the paper, the ``dimension'' of a topological vector space space always means its algebraic dimension.

Building in this direction, S\'anchez Ruiz and Saxon authored a series of three papers,  \cite{SSR,SSR2,SSR3}, in which they sought to characterize the smallest cardinality of other kinds of infinite-dimensional topological vector spaces. For example, they prove in \cite{SSR} that there is a metrizable barrelled space of dimension $\bdd$ (their example is a modification of Tweddle's space from \cite{Tweddle}), and that there is no infinite-dimensional metrizable barrelled space of dimension $<\!\bdd$. In other words,  
$$\bdd \,=\, \min \set{\k}{\text{there is a metrizable barrelled space of infinite dimension }\k}.$$
Here $\bdd$ denotes the unbounding number, which is one of the classic cardinal characteristics of the continuum (see \cite{Blass} or \cite{vD}). These are cardinal numbers defined in some combinatorial or topological way, and that lie between $\aleph_1$ and $\continuum$. 

S\'anchez Ruiz and Saxon define another such cardinal:
$$\barr \,=\, \min \set{\k}{\text{there is a normed barrelled space of infinite dimension }\k}.$$ 
Their characterization of $\bdd$ described above shows $\bdd \leq \barr$, and any separable Banach space witnesses $\barr \leq \continuum$. 
They ask in \cite{SSR} and \cite{SSR2} whether $\barr$ might not simply be an alternative characterization of an already-known cardinal characteristic, much like the characterizations of $\bdd$ and $\continuum$ described above. 
The natural candidates at the time were $\barr = \bdd$, the analogous cardinal for metrizable barrelled spaces, or $\barr = \continuum$, like with Mazur's result for Fr\'echet spaces. 
Recent work of Sobota shows that the inequality $\barr < \continuum$ is consistent with \zfc (see Theorem 7.3 and the proof of Proposition 3.2 in \cite{Sobota}), so $\barr$ is not simply a recharacterization of $\continuum$. 

The present paper contains two main theorems. Both are closely tied to the cardinal $\barr$ and the questions raised by S\'anchez Ruiz and Saxon concerning $\barr$, as well as to the work of Sobota and Zdomskyy on Boolean algebras with the Nikodym property \cite{Sobota,SZ1,SZ2,SZ3}. The first of these two results, found as Theorem~\ref{thm:l1+} in Section~\ref{sec:generalization} below, states: 

\begin{theorem1}
If $X$ is an infinite-dimensional Banach space with density character $\k$, then there is a barrelled subspace of $X$ with dimension $\cf[\k]^\w \cdot \mathrm{non}(\M)$.
\end{theorem1}

The cardinal number $\mathrm{non}(\M)$ is defined as the smallest size of a meager subset of $\R$. Clearly $\aleph_1 \leq \mathrm{non}(\M)$ (i.e., non-meager sets are uncountable), and the Baire Category Theorem implies $\mathrm{non}(\mathcal M) \leq \continuum$. The cardinal $\mathrm{non}(\mathcal M)$, known as the \emph{uniformity number of the meager ideal}, is another of the classical cardinal characteristics of the continuum. 
What exactly $\cf[\k]^\w$ means is explained in Section~\ref{sec:generalization}, but let us point out for now that $\cf[\aleph_0]^\w = 1$, which means $\cf[\aleph_0]^\w \cdot \mathrm{non}(\M) = \mathrm{non}(\M)$. 
Thus a special case of the above theorem is: 
\begin{itemize}
\item Every infinite-dimensional separable Banach space has a barrelled subspace with dimension $\mathrm{non}(\M)$.
\end{itemize}
In particular, $\barr \leq \mathrm{non}(\M)$. Combining this with the results of \cite{SSR2}, we get 
$$\aleph_1 \,\leq\, \bdd \,\leq\, \barr \,\leq\, \mathrm{non}(\M) \,\leq\, \continuum.$$
It is consistent with \zfc that $\aleph_1 = \continuum$. This assertion, known as the Continuum Hypothesis and denoted \ch, implies all the above cardinals are equal. 
However, it is also consistent that $\mathrm{non}(\M) < \continuum$. (In fact this is true in Cohen's original model showing that the failure of \ch is consistent with \zfc.) 
Thus our theorem provides an alternative proof of the consistency of $\barr < \continuum$, first proved in \cite{Sobota}. 

Let us point out that our result offers two insights beyond just the consistency of $\barr < \continuum$. First, we identify a classical characteristic, $\mathrm{non}(\M)$, as an upper bound for $\barr$. Second, and perhaps more importantly, our result reveals the ubiquity of small-dimensional barrelled spaces. Not only may such spaces exist, but they may be embedded in every separable Banach space. 
Building on the theme of ubiquity, we prove another corollary to Theorem~\ref{thm:l1+} (see Corollary~\ref{cor:l1+}): 
\begin{itemize}
\item It is consistent with \zfc that $\continuum$ is arbitrarily large, and that every Banach space with density character $<\!\continuum$ has an infinite-dimensional barrelled subspace with dimension $<\!\continuum$. 
\end{itemize}

After proving these theorems about the existence of small-dimensional barrelled spaces, 
in Section~\ref{sec:random} we turn to the related question of just how small their dimension can be. Our main results in this section, summarized in Theorem~\ref{thm:MainCor}, are: 
\begin{theorem2}
If the dual of a Banach space $X$ does not contain $c_0$ or $\ell^p$ for any $p \geq 1$, then $X$ does not have a barrelled subspace with dimension $<\! \mathrm{cov}(\NN)$. 
It is consistent they have no barrelled subspaces of dimension $\bdd$. 
\end{theorem2}
\noindent This result is proved by finding a way to associate members of a Lebesgue probability space to norm-unbounded sequences of functionals on $X$, in such a way that any particular $\bx \in X$ will almost surely not witness the norm-unboundedness of a ``random'' norm-unbounded sequence. 
This ``theorem'' is really two theorems, in the sense that our proof divides cleanly into two separate cases: the $\ell^1$ case, and the case where $X'$ contains a copy of either $c_0$ or of $\ell^p$ for some $p > 1$. 
For the $\ell^1$ case, the ``random'' norm-unbounded sequences of functionals needed in the proof are obtained by taking random walks in the dual of $X$. 

The cardinal $\mathrm{cov}(\NN)$ appearing in this theorem is another cardinal characteristic of the continuum, known as the \emph{covering number of the null ideal}, and defined as 
$$\textstyle \mathrm{cov}(\NN) \,=\,\textstyle \min \set{|\F|}{\F \text{ is a family of Lebesgue null subsets of }\R\text{ and }\bigcup \F \supseteq \R}.$$
Amongst the classical cardinal characteristics mentioned so far, the following inequalities are provable in \zfc:

\begin{center}
\begin{tikzpicture}[scale=.9]

\node at (.1,0) {$\aleph_1$};
\node at (2,.75) {$\mathrm{cov}(\NN)$};
\node at (2,-.75) {$\bdd$};
\node at (4.3,0) {$\mathrm{non}(\M)$};
\node at (5.6,0) {$\leq$};
\node at (6.4,0) {$\continuum$};

\node[rotate=25] at (.9,.5) {\small $\leq$};
\node[rotate=-25] at (.95,-.45) {\small $\leq$};
\node[rotate=25] at (3.05,-.45) {\small $\leq$};
\node[rotate=-25] at (3.1,.5) {\small $\leq$};

\end{tikzpicture}
\end{center}
No further inequalities are provable: it is consistent with \zfc that all these inequalities are strict (even simultaneously), and either of the orderings $\bdd < \mathrm{cov}(\NN)$ and $\mathrm{cov}(\NN) < \bdd$ is consistent (though, of course, not simultaneously). 

In particular, because the inequality $\bdd < \mathrm{cov}(\NN)$ is consistent, our theorem shows it is consistent that every space whose dual does not contain $c_0$ or any $\ell^p$ also does not contain a barrelled subspace of dimension $\bdd$. 

If this were true for all Banach spaces, it would mean $\bdd < \barr$. (Recall that every normed vector space is contained in a Banach space.) 
At present we do not know how to prove it for all Banach spaces, only those whose duals do not contain $c_0$ or any $\ell^p$. 
But let us point out that this is a very broad class of Banach spaces. 
It was conjectured from the early days of Banach space theory (see \cite{Lind}) that every Banach space contains either $c_0$ or $\ell^p$ for some $p \geq 1$. This conjecture stood for decades, until finally refuted by Tsirelson in \cite{Tsirelson}. (For a more extreme counterexample, see \cite{Gowers}.) 
At any rate, all ``classical'' Banach spaces, i.e., all the spaces of continuous functions, or differentiable or integrable functions and their duals, all the Banach spaces used in functional analysis for the first forty years of Banach space theory, contain $c_0$ or $\ell^p$ for some $p \geq 1$. 
There is also a technical sense in which every ``simply defined'' Banach space contains $c_0$ or $\ell^p$ for some $p \geq 1$ (see \cite{Khakani}). 

Because of this, our results show that one cannot prove $\bdd = \barr$ without using more exotic, non-classical Banach spaces. 
It also means that one cannot prove that every infinite-dimensional Banach space has a barrelled subspace with dimension $\bdd$. In other words, the results from Section~\ref{sec:l1} cannot be improved by replacing $\mathrm{cov}(\M)$ with $\bdd$. 
Thus, while we have not proved the consistency of $\bdd < \barr$, we have ruled out some natural strategies for trying to show $\bdd = \barr$. 

This second main theorem is also related to Boolean algebras with the Nikodym property (defined in Section 4 below). 
The connection is forged via an observation of Schachermayer in \cite{Sch}: a Boolean algebra $\AA$ has the Nikodym property if and only if the 
subspace $Z = \mathrm{Span}\set{\chi_{[a]}}{a \in \AA}$ of $C(\mathrm{St}(\AA))$, spanned by the characteristic functions 
$\chi_{[a]}$ of all clopen subsets of the Stone space of $\AA$, is barrelled 
(where $C(\mathrm{St}(\AA))$ is the Banach space of continuous functions $\mathrm{St}(\AA) \to \R$). 
In \cite{Sobota}, Sobota proves it is consistent for a Boolean algebra of size $<\!\continuum$ to have the Nikodym property. Later, in \cite{SZ1,SZ2,SZ3}, Sobota and Zdomskyy embark on a thorough study of just how small a Nikodym Boolean algebra can be, and prove several consistency results along these lines. The work in \cite{Sobota} includes three lower bounds: a Nikodym Boolean algebra cannot be smaller than $\bdd$, $\mathfrak{s}$, or $\mathrm{cov}(\M)$. Our theorem gives another: 
\begin{itemize}
\item Every Boolean algebra with the Nikodym property has size $\geq\! \mathrm{cov}(\N)$. 
\end{itemize}
\noindent This is Theorem~\ref{thm:Nik} below. It follows from our second main theorem because of Schachermayer's observation, and because for any compact Hausdorff space $K$, the dual of the Banach space $C(K)$ contains an isometrically embedded copy of $c_0$. 

In what follows, a ``Banach space'' may be taken as a (complete normed) vector space over either $\R$ or $\CC$. We assume the reader is familiar with the basic theory of such spaces, and recommend \cite{Swartz} as a reference. We use some topology and descriptive set theory in what follows, our basic references for which are \cite{Engelking} and \cite{Kechris}, but we assume less familiarity with these topics. Although we do prove some consistency results, no knowledge of forcing  is needed, with just one exception, the proof of Corollary~\ref{cor:l1+}, and this can be skipped by those who wish to without losing the main thread of the paper.

\section{The separable case of the first main theorem}\label{sec:l1}

In this section we present a proof of a special case of the first main theorem mentioned in the introduction. The special case states that every separable Banach space has a barrelled subspace of cardinality $\mathrm{non}(\M)$. 
We have chosen to present this case separately because the proof is simpler in this case, involving fewer preliminaries from topology and set theory, and some readers may be most interested only in this special case. 
Furthermore, the general theorem can be proved simply by ``adding on'' some set-theoretic generalizations and observations, without the need to repeat the main part of the argument from the separable case. 
Thus it is our hope that dividing things in this way adds to the clarity and readability of the paper, without significantly adding to its length.

\vspace{1.5mm}

Barrelled spaces are locally convex topological vector spaces for which every barrel 
(absolutely convex, closed, absorbing set) is a neighborhood of zero. These 
spaces are important because such fundamental theorems  
as the Uniform Boundedness Principle and the Closed Graph and Open Mapping 
Theorems hold in them. See \cite{Swartz} for more information on barrelled spaces. 
In what follows, we use the following fact about barrelled spaces; the reader otherwise unfamiliar with barrelled spaces may take this as the definition of \emph{barrelled} for the purposes of this paper:
\begin{itemize}
\item[$\circ$] Let $X$ be a Banach space and $X'$ its dual space. A subspace $Z$ of $X$ is barrelled if and only if for any norm-unbounded sequence $\seq{\by_m}{m \in \w}$ in $X'$, i.e. a sequence with $\sup_{m \in \w}\norm{\by_m} = \infty$, there is some $\bx \in Z$ such that $\sup_{m \in \w}|\by_m(\bx)| = \infty$. (See, e.g., \cite[Theorem 5.21 and Corollary 24.5]{Swartz}.)
\end{itemize}

Recall that the \emph{density character} of a Banach space $X$ is the cardinal number 
$$d(X) \,=\, \min\set{|D|}{D \sub X \text{ and } \closure{D} = X},$$
i.e., it is the least size of a topologically dense subset of $X$. The topological literature typically refers to this as the \emph{density} of $X$ (for any space $X$, not just Banach spaces), but the term ``density character'' is more common for Banach spaces. 

Recall that a set $E$ of unit vectors in a Banach space $X$ is called \emph{norming} if for any $\by \in X'$, the dual of $X$, $\norm{\by} = \sup_{\be \in E}|\by(\be)|$. 

\begin{observation}\label{obs:norming}
An infinite-dimensional Banach space has density character $\k$ if and only if it has a norming set of size $\k$. 
\end{observation}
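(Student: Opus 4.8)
The plan is to establish the slightly more precise statement that $d(X)$ equals the least cardinality of a norming subset of $X$; the displayed equivalence follows at once, since a norming set of least cardinality then has exactly $d(X)$ elements while no norming set is smaller.

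For the existence of a norming set of size $d(X)$, I would start from a dense set $D \sub X$ with $|D| = d(X)$, delete $0$ (this leaves a set still dense in $X \setminus \{0\}$, since $X$ has no isolated points), and normalize each remaining vector to obtain a set $E$ of unit vectors with $|E| \le |D| = d(X)$. The point is that $E$ is dense in the unit sphere $S = \set{\bx \in X}{\norm{\bx} = 1}$: the normalization map $\bx \mapsto \bx/\norm{\bx}$ is a continuous surjection of $X \setminus \{0\}$ onto $S$, and the image of a dense set under a continuous surjection is dense. Since for each $\by \in X'$ the map $\be \mapsto |\by(\be)|$ is continuous on $S$, its supremum over the dense set $E$ coincides with its supremum over all of $S$, which is $\norm{\by}$ (using only $X \ne \{0\}$); hence $E$ is norming. (If one prefers to avoid the language of density in $S$, one can argue directly: given $\by \ne 0$ and $\e > 0$, choose a unit vector $\bx$ with $|\by(\bx)| > \norm{\by} - \e/2$, approximate $\bx$ closely enough by some $\bd \in D$, and note that $\bd/\norm{\bd}$ is then close enough to $\bx$ that $|\by(\bd/\norm{\bd})| > \norm{\by} - \e$.)

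For the converse bound, that every norming set $E$ has $|E| \ge d(X)$, the first step is to observe that a norming set has dense linear span: if $\closure{\mathrm{span}\,E} \ne X$, then Hahn--Banach yields a nonzero $\by \in X'$ vanishing on $E$, whence $\sup_{\be \in E}|\by(\be)| = 0 < \norm{\by}$, contradicting the norming condition. (The same argument, together with the fact that finite-dimensional subspaces are closed, also shows that an infinite-dimensional space has no finite norming set, so $|E|$ is infinite.) Given that $\closure{\mathrm{span}\,E} = X$, the collection of all finite linear combinations of elements of $E$ with coefficients in $\Q$ (or in $\Q + i\Q$ over $\CC$) is dense in $\mathrm{span}\,E$, hence dense in $X$, and has cardinality $|E|$; therefore $d(X) \le |E|$.

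I do not anticipate any real obstacle, as this is a standard fact recorded here for later use. The only points requiring mild care are: that normalization carries a dense subset of $X \setminus \{0\}$ to a dense subset of $S$; the standard fact that $\norm{\by}$ is attained as a supremum over the unit sphere, not merely the unit ball; and the routine cardinal arithmetic $|\mathrm{span}_{\Q}(E)| = |E|$ for infinite $E$.
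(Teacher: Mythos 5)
Your proof is correct and follows essentially the same route as the paper's: normalize a dense set to get a norming set of size $d(X)$, and for the converse use Hahn--Banach to see that a norming set has dense span, so its rational-linear combinations form a dense set of the same cardinality. You supply somewhat more detail on the forward direction (density in the unit sphere) than the paper does, but the argument is the same.
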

\begin{proof}
On the one hand, if $D \sub X$ is a dense subset of $X$ then $\set{\frac{1}{\norm{\bd}}\bd}{\bd \in D \setminus \{0\}}$ is a norming set. Thus $X$ has a norming set of cardinality $d(X)$. 

On the other hand, suppose $N$ is a norming set for $X$. 
While a norming set may not be dense in the unit sphere, the span of a norming set is dense in $X$. (This is a consequence of the Hahn-Banach Theorem.) 
It follows that 
$$D \,=\, \set{c_1\be_1 + \dots + c_n\be_k}{c_1,\dots,c_k \in \Q \text{ and } \be_1,\dots,\be_k \in N}$$ 
is dense in $X$. And if $N$ is infinite (which must be the case if $X$ is infinite-dimensional), then $|D| = |N|$, and thus $d(X) \leq |N|$. 
\end{proof}

We adopt the convention that a sequence in a set $E$ is a function $\w \to E$, where $\w$ denotes the first countable ordinal $\w = \{0,1,2,\dots\} = \N \cup \{0\}$. 
Adopting the usual notation for ordinals, we take $n = \{0,1,2,\dots,n-1\}$ for all $n \in \w$. This is merely a convenience, which enables us to write $f \rest n$ instead of $f \rest \{0,1,2,\dots,n-1\}$. 
If $s$ is a function with $\mathrm{dom}(s) = n = \{0,1,2,\dots,n-1\}$, then we write $|s| = n$. 
 
The space $\w^\w$ of all functions $\w \to \w$ is called the \emph{Baire space}. 
The standard basis for the Baire space consists of sets of the form
$$\tr{s} \,=\, \set{x \in \w^\w}{x \rest |s| = s},$$
where $s$ is a function $n \to \w$. 
Observe that $\tr{t} \sub \tr{s}$ if and only if $t$ is an extension of $s$. 
The Baire space is separable and completely metrizable, making it an example of a Polish space. 
It is homeomorphic to the space $\R \setminus \Q$ of irrational numbers. 

Let $\I$ denote the set of injective functions $\w \to \w$. This is a subspace of $\w^\w$, and in fact it is a $G_\dlt$ subspace. To see this, observe that 
$$U_n \,=\, \set{x \in \w^\w}{x \rest n \text{ is injective}\vphantom{f^2}} \,=\, \textstyle \bigcup \set{\tr{s}}{|s|=n \text{ and $s$ is injective}\vphantom{f^2}}$$
is an open set for every $n \in \w$, and this means that $\I = \bigcap_{n \in \w}U_n $ is $G_\dlt$. 
Recalling that a subspace of a completely metrizable space is itself completely metrizable if and only if it is $G_\dlt$ \cite[Theorems 4.3.23-24]{Engelking}, this shows $\I$ is completely metrizable, hence Polish. In fact, by the Alexandrov-Urysohn characterization of the Baire space, $\I$ is homeomorphic to $\w^\w$ \cite[Exercise 7.2G]{Kechris}. In context, we write just $\tr{s}$, rather than $\tr{s} \cap \I$, to denote the basic open neighborhood of $\I$ described by an injective function $s: n \to \w$. 

Let $X$ be a separable Banach space. By Observation~\ref{obs:norming}, $X$ has a countable norming set. Let $N = \set{\be_i}{i \in \w}$ be (an enumeration of) a norming set in $X$. Given $f \in \I$, define 
$$\bx_f^N \,=\, \textstyle \sum_{n \in \w}\frac{1}{3^n}\be_{f(n)}.$$ 
Because $N$ contains only unit vectors, this sum converges and $\bx_f^N \in X$. 
Thus the map $f \mapsto \bx_f^N$ gives us a natural way to identify $\I$ with a subset of $X$. 

\begin{lemma}\label{lem:MainLemma} 
Let $X$ be a separable Banach space, and let $N = \set{\be_i}{i \in \w}$ be a norming set for $X$. 
Suppose $\seq{\by_m}{m \in \w}$ is a sequence in the dual space $X'$ with $\sup_{m \in \w}\norm{\by_m} = \infty$, but $\sup_{m \in \w} \big| \by_m( \be_i )\big| < \infty$ for all $i \in \w$. 
Then 
$$\textstyle \set{f \in \I}{\sup_{m \in \w} \big| \by_m(\bx_f) \big| = \infty}$$ 
is a co-meager subset of $\I$.
\end{lemma}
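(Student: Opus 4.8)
The plan is to write the target set as a countable intersection of dense open subsets of $\I$, from which being co-meager follows immediately (the complement of a dense open set is closed and nowhere dense). For $k \in \w$ put
$$B_k \,=\, \set{f \in \I}{\text{there is }m \in \w\text{ with }\big|\by_m(\bx_f)\big| > k}.$$
Since $\sup_{m}\big|\by_m(\bx_f)\big| = \infty$ holds precisely when $f$ lies in every $B_k$, the set in the statement equals $\bigcap_{k\in\w}B_k$, so it suffices to prove each $B_k$ is open and dense. Openness is routine: the map $f \mapsto \bx_f$ is (uniformly) continuous from $\I$ to $X$, because $f \rest j = g \rest j$ forces $\norm{\bx_f - \bx_g} \le \sum_{i \ge j}\tfrac{2}{3^i} = 3^{1-j}$; hence each $f \mapsto \big|\by_m(\bx_f)\big|$ is continuous and $B_k = \bigcup_m\set{f \in \I}{\big|\by_m(\bx_f)\big| > k}$ is a union of open sets.

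The crux is density, and the engine for it is the following elementary observation: for every finite $F \sub \w$ and every real $K \ge 0$ there are $m \in \w$ and $i \in \w \setminus F$ with $\big|\by_m(\be_i)\big| > \tfrac12\norm{\by_m} + K$. Indeed, suppose not. Since $N$ is norming, $\norm{\by_m} = \sup_{i\in\w}\big|\by_m(\be_i)\big| = \max\big\{\sup_{i\notin F}\big|\by_m(\be_i)\big|,\ \max_{i\in F}\big|\by_m(\be_i)\big|\big\}$, so failure of the observation yields, for each $m$, either $\norm{\by_m} \le \tfrac12\norm{\by_m} + K$ (hence $\norm{\by_m}\le 2K$) or $\norm{\by_m} \le \max_{i\in F}\big|\by_m(\be_i)\big| \le \max_{i\in F}\sup_{m'}\big|\by_{m'}(\be_i)\big|$; by hypothesis the latter is a finite number $C_F$ not depending on $m$, so in all cases $\norm{\by_m} \le \max\{2K, C_F\}$, contradicting $\sup_m\norm{\by_m} = \infty$.

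Now fix $k$ and a nonempty basic open set $\tr{s}$ of $\I$, with $s \colon n \to \w$ injective. Let $\bx_s = \sum_{j<n}\tfrac1{3^j}\be_{s(j)}$; then $\big|\by_m(\bx_s)\big| \le \tfrac32\max_{i\in\mathrm{ran}(s)}\sup_{m'}\big|\by_{m'}(\be_i)\big| =: D_s$ for all $m$ (with $D_s = 0$ if $n=0$). Apply the observation with $F = \mathrm{ran}(s)$ and $K = 3^n(k + D_s)$ to get $m$ and $i \notin \mathrm{ran}(s)$ with $\big|\by_m(\be_i)\big| > \tfrac12\norm{\by_m} + 3^n(k+D_s)$, and set $t = s\cat\langle i\rangle$, an injective extension of $s$ with $|t| = n+1$ (so $\tr{t}$ is a nonempty open subset of $\tr{s}$, since $t$ extends to an injection $\w\to\w$). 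For any $f \in \I$ extending $t$ we have $\bx_f = \bx_s + \tfrac1{3^n}\be_i + r_f$ with $\norm{r_f} \le \sum_{j\ge n+1}3^{-j} = \tfrac1{2\cdot 3^n}$, hence
$$\big|\by_m(\bx_f)\big| \,\ge\, \tfrac1{3^n}\big|\by_m(\be_i)\big| - \big|\by_m(\bx_s)\big| - \tfrac{\norm{\by_m}}{2\cdot 3^n} \,=\, \tfrac1{3^n}\!\left(\big|\by_m(\be_i)\big| - \tfrac12\norm{\by_m}\right) - \big|\by_m(\bx_s)\big| \,>\, (k + D_s) - D_s \,=\, k.$$
Thus $\tr{t} \sub B_k$, so $B_k$ meets $\tr{s}$ in a nonempty open set; as $\tr{s}$ was arbitrary, $B_k$ is dense, completing the proof.

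The step I expect to be the main obstacle is exactly this density argument, and within it the point just exploited: a single coordinate $\be_i$ with $\big|\by_m(\be_i)\big|$ large is not by itself enough to force $\big|\by_m(\bx_f)\big|$ large, because the remaining, unchosen coordinates of $\bx_f$ can contribute (through $\by_m$) as much as $\norm{\by_m}/(2\cdot 3^n)$, a quantity over which we have no a priori control once $m$ is fixed. The resolution is to insist that the chosen $\be_i$ account for strictly more than half of $\norm{\by_m}$, plus a margin absorbing the stem $\bx_s$ and the target value $k$; the observation guarantees such a coordinate always exists, and this is the one place where both hypotheses on $\seq{\by_m}{m\in\w}$ — unbounded in norm, yet bounded on each fixed $\be_i$ — are genuinely used. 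Everything else (the continuity estimates, the reduction to the $B_k$, the nonemptiness of $\tr{t}$) is bookkeeping.
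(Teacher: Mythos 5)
Your proof is correct, and it runs on the same engine as the paper's: extend a finite injective stem $s$ by one fresh index $i$ at which some $\by_m$ comes close enough to attaining its norm that the weight $3^{-|s|}$ on the new coordinate dominates the tail bound $\norm{\by_m}/(2\cdot 3^{|s|})$, while the elementwise boundedness hypothesis caps the stem's contribution. The packaging differs in two ways worth noting. First, your dense open sets $B_k = \set{f \in \I}{\exists m\; |\by_m(\bx_f^N)| > k}$ are exactly the levels of the target set, so $\bigcap_{k}B_k$ \emph{is} the target and nothing remains to check once density is established; the price is that openness of $B_k$ requires the (easy) continuity of $f \mapsto \bx_f^N$. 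The paper instead uses sets $U_n$ defined by a combinatorial property of finite stems (satisfying its condition $(*)$ at $n$ places), so openness is free --- they are unions of basic open sets --- but a separate computation at the end is needed to show the resulting dense $G_\delta$ lies inside the target. Second, your threshold $|\by_m(\be_i)| > \tfrac12\norm{\by_m} + 3^{|s|}(k+D_s)$ replaces the paper's $|\by_m(\be_j)| > \norm{\by_m}-1 > k3^kM^f_k$; both exist for the same reason (the norming hypothesis plus norm-unboundedness plus elementwise boundedness) and both serve the same purpose of beating the tail term, but your ``half the norm plus a margin'' observation isolates cleanly the one place where all the hypotheses on $\seq{\by_m}{m \in \w}$ interact. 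Your version is, if anything, slightly more economical than the paper's.
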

\begin{proof}
For each $i \in \w$, define  
$B_i = \sup_{m \in \w} \big| \by_m( \be_i )\big|$, which is finite by hypothesis. 
Given an injective function $s: n \to \w$ and given some $k \leq n$, define 
$$M_k^s \,=\,\textstyle \max \Big\{k,\frac{3}{2}B_{s(0)},\frac{3}{2}B_{s(1)},\dots,\frac{3}{2}B_{s(k-1)} \Big\}.$$
Observe that the value of $M^s_k$ does not depend on $s(i)$ for any $i \geq k$. In particular, if $f: \w \to \w$ is injective and $k \leq m < n$, then $M^{f \restriction m}_k = M^{f \restriction n}_k$. In light of this, let $M^f_k$ denote the common value of $M^{f \restriction n}_k$ for all $n > k$. 

Let us say that $f: \w \to \k$ satisfies property $(*)$ at some $k \in \w$ if 
$$(*): \quad \text{There is some $m \in \w$ such that $\big| \by_m(\be_{f(k)}) \big| \,>\, \norm{\by_m}-1 \,>\, k3^kM^f_k$.}$$
In this case, we say that $m \in \w$ \emph{witnesses} property $(*)$ at $k$ for $f$. 
Roughly, $(*)$ says that there is some $m \in \w$ such that $\big| \by_m(\be_{f(k)}) \big|$ is very large compared to $M^f_k$, and is also close to the norm of $\by_m$. 

For each $n \in \w$, let 
\begin{align*}
U_n &\,=\, \set{f \in \I}{f \text{ satisfies }(*) \text{ at $k$ for at least $n$ different values of $k$}} \\
&\,=\, \textstyle \bigcup \set{\tr{s}}{s \text{ satisfies }(*) \text{ at $k$ for at least $n$ different values of $k$}}.
\end{align*}
This set is clearly open, and we claim it is also dense in $\I$. 

Fix $n \in \w$. 
To see that $U_n$ is dense in $\I$, fix an injective function $s: k \to \w$ for some $k \in \w$, so that $\tr{s}$ is a basic open subset $\I$. 
Using our hypothesis that $\sup_{m \in \w}\norm{\by_m} = \infty$, first pick some $m \in \w$ such that 
$\norm{\by_m} > k3^kM_k^s+1$. 
Then, using the fact that $\set{\be_i}{i \in \w}$ is a norming set, 
pick some $j \in \w$ such that $|\by_m(\be_j)| > \norm{\by_m}-1$. Define $s': (k+1) \to \k$ by setting
$$s'(i) \,=\, \begin{cases}
s(i) &\text{ if } i < k \\
j &\text{ if }i = k.
\end{cases}$$
Observe that $|\by_m(\be_j)| > k3^kM^s_k > B_{s(i)} \geq |\by_m(\be_{s(i)})|$ for all $i < k$. It follows that $j \neq s(i)$ for any $i < k$, and therefore $s'$ is injective. 
Furthermore, our choices of $m$ and $j$ ensure that $s'$ satisfies property $(*)$ at $k = |s|$. 

Now fix an injective function $s_0: k \to \w$ for some $k \in \w$. 
Applying the argument from the previous paragraph $n$ times in a row, we can obtain sequences $s_1,s_2,\dots,s_n$ such that 
for each $i < n$, $s_{i+1} \rest |s_i| = s_i$ and $s_{i+1}$ satisfies $(*)$ at $|s_i|$. 
But then $s_n$ satisfies $(*)$ for at least $n$ different values of $k$, namely for $k = |s_0|,|s_1|,\dots,|s_{n-1}|$. Thus $\tr{s_n} \sub U_n$. 
Also, because $s_{i+1} \rest |s_i| = s_i$ for all $i < n$, 
$$\tr{s_0} \supseteq \tr{s_1} \supseteq \tr{s_2} \supseteq \dots \supseteq \tr{s_n}.$$ 
Therefore $\tr{s_n} \sub \tr{s_0} \cap U_n$. As $\tr{s_0}$ was an arbitrary basic open subset of $\I$, this shows $U_n$ is dense in $\I$.

Because each of the $U_n$ is an open dense subset of $\I$, 
$$X \,=\, \textstyle \bigcap_{n \in \w}U_n \,=\, \set{f \in \I}{f \text{ satisfies }(*) \text{ at $k$ for infinitely many values of $k$}}$$ 
is a dense $G_\dlt$ subset of $\I$. To finish the proof of the lemma, it suffices to show that if $f \in X$ then $\sup_{m \in \w} \big| \by_m(\bx_f^N) \big| = \infty$. 

Fix $f \in X$, fix some $k \in \w$ such that $f$ satisfies $(*)$ at $k$, and fix some $m \in \w$ such that $m$ witnesses property $(*)$ at $k$ for $f$. 
Because $\by_m$ is linear,  
\begin{align*}
M^f_k &\,=\,\textstyle \max \Big\{k,\frac{3}{2}B_{f(0)},\frac{3}{2}B_{f(1)},\dots,\frac{3}{2}B_{f(k-1)} \Big\} \\
&\,\geq\, \textstyle \sup \set{ \,\big| \by_m\big( \sum_{i < k} x_i\be_{f(i)} \big)\big| }{ \sum_{i < k} |x_i|  \leq \frac{3}{2}} \\
&\,\geq\,\textstyle \big| \by_m \big( \sum_{i < k}\frac{1}{3^i}\be_{f(i)} \big) \big|.
\end{align*}

Using this fact together with the triangle inequality, the inequalities in $(*)$, and the fact that $M^f_k \geq k$, we get the following lower bound on $\big| \by_m(\bx_f^N) \big|$:
\begin{align*}
\big| \by_m(\bx_f^N) \big| &\,=\,\textstyle \big|\, \by_m \big( \sum_{i \in \w}\bx_f^N(f(i))\be_{f(i)} \big)\,\big| \\ 
&\,=\,\textstyle \big|\, \by_m \big( \sum_{i \in \w}\frac{1}{3^i}\be_{f(i)} \big)\,\big| \\ 
&\,=\,\textstyle \big|\, \by_m \big( \sum_{i < k}\frac{1}{3^i}\be_{f(i)} \big) + \by_m \big( \frac{1}{3^k}\be_{f(k)} \big) + \by_m \big( \sum_{i > k}\frac{1}{3^i}\be_{f(i)} \big) \,\big| \\ 
&\,\geq\, \textstyle \big| \by_m(\frac{1}{3^k}\be_{f(k)}) \big| - \big| \by_m \big( \sum_{i < k}\frac{1}{3^i}\be_{f(i)} \big) \big| - \big| \by_m \big( \sum_{i > k}\frac{1}{3^i}\be_{f(i)} \big) \big| \\ 
&\,\geq\,\textstyle \frac{1}{3^k}\big| \by_m(\be_{f(k)}) \big| - M^f_k - \norm{\by_m}_\infty \bignorm{ \sum_{i > k}\frac{1}{3^i}\be_{f(i)} }_1 \\ 
&\,\geq\,\textstyle \frac{1}{3^k}\big| \by_m(\be_{f(k)}) \big| - M^f_k - \frac{1}{2 \cdot 3^k}\norm{\by_m}_\infty \\ 
&\,\geq\,\textstyle \frac{1}{3^k} \Big( 3^k \big| \by_m(\be_{f(k)}) \big| - 3^kM^f_k - \frac{1}{2}\norm{\by_m}_\infty \Big) \\ 
&\,\geq\,\textstyle \frac{1}{3^k} \Big( 3^k (\norm{\by_m}_\infty-1) - \frac{1}{k}(\norm{\by_m}_\infty-1) - \frac{1}{2}\norm{\by_m}_\infty \Big) \\ 
&\,\geq\,\textstyle \frac{1}{3^k} \Big( (3^k-1)\norm{\by_m}_\infty - 3^k \Big) \\ 
&\,\geq\,\textstyle \big( 1-\frac{1}{3^k} \big) M^f_k-1 \\ 
&\,\geq\,\textstyle \frac{2}{3} k - 1.
\end{align*}
Because $f \in X$, this shows that there are arbitrarily large values of $k$ such that $\big| \by_m(\bx_f^N) \big| \geq \frac{2}{3}k - 1$ for some $m \in \w$. 
Hence $\sup_{m \in \w} \big| \by_m(\bx_f^N) \big| = \infty$.
\end{proof}

\begin{theorem}\label{thm:nonmeager}
Let $X$ be a separable Banach space, and let $N = \set{\be_i}{i \in \w}$ be a norming set for $X$. 
If $Y$ is a non-meager subset of $\I$, then 
$$Z \,=\, \mathrm{Span}\big(\! \set{\be_i}{i \in \w} \cup \big\{ \bx_f^N :\, f \in Y \big\} \big)$$
is a barrelled subspace of $X$.
\end{theorem}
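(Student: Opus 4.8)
The plan is to verify the characterization of barrelledness recalled in the excerpt: I must show that for every norm-unbounded sequence $\seq{\by_m}{m \in \w}$ in $X'$, there exists $\bx \in Z$ with $\sup_{m \in \w}|\by_m(\bx)| = \infty$. So fix such a sequence. First I would split into two cases according to whether the sequence is already ``detected'' by the norming vectors $\be_i$ themselves.

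\emph{Case 1: there is some $i \in \w$ with $\sup_{m \in \w}|\by_m(\be_i)| = \infty$.} Then $\be_i \in Z$ is itself the witness, and we are done immediately. This case uses only that $N \sub Z$.

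\emph{Case 2: $\sup_{m \in \w}|\by_m(\be_i)| < \infty$ for every $i \in \w$.} Now the hypotheses of Lemma~\ref{lem:MainLemma} are exactly satisfied, so the set $C = \set{f \in \I}{\sup_{m \in \w}|\by_m(\bx_f^N)| = \infty}$ is co-meager in $\I$. Since $Y$ is non-meager in $\I$ by assumption, and $\I \setminus C$ is meager, $Y$ cannot be contained in $\I \setminus C$; hence $Y \cap C \neq \emp$. Pick $f \in Y \cap C$: then $\bx_f^N \in Z$ and $\sup_{m \in \w}|\by_m(\bx_f^N)| = \infty$, as required. Finally, I should note that $Z$ is a genuine subspace of $X$ (it is by construction a span of vectors in $X$), and being a subspace, $Z$ inherits the locally convex topology making the barrelledness criterion applicable.

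The only real content beyond bookkeeping is the case split and the observation that non-meager sets meet co-meager sets — everything substantive is already packaged in Lemma~\ref{lem:MainLemma}. I do not anticipate a genuine obstacle here; the one point to state carefully is why Case 1 and Case 2 are exhaustive (they obviously are) and why in Case 2 the lemma's second hypothesis holds (it is precisely the negation of Case 1's condition, quantified over all $i$). It is perhaps worth remarking that the choice of $f$ in $Y \cap C$ genuinely uses the hypothesis that $Y$ is non-meager rather than merely nonempty: a single point or a countable set would not suffice, since the complement of $C$ can be a dense meager set containing any prescribed countable set.
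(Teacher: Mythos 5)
Your proposal is correct and follows exactly the same route as the paper's proof: the case split on whether some $\be_i$ already detects the norm-unboundedness, followed by an application of Lemma~\ref{lem:MainLemma} and the observation that a non-meager set must meet every co-meager set. No gaps; the additional remarks on exhaustiveness and on why non-meagerness (rather than mere nonemptiness) is needed are accurate but not strictly necessary.
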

\begin{proof}
Fix a sequence $\seq{ \by_m }{m \in \w}$ in $X'$ such that $\sup_{m \in \w} \norm{\by_m} = \infty$.  
We wish to show that $\sup_{m \in \w} |\by_m(\bx)| = \infty$ for some $\bx \in Z$. 
If $\sup_{m \in \w} | \by_m(\be_i) | = \infty$ for some $i \in \w$, then we are done, because $\be_i \in Z$. 
If not, then by the previous lemma, $\textstyle \set{f \in \I}{\sup_{m \in \w} \big| \by_m(\bx_f^N) \big| = \infty}$ 
is a co-meager subset of $\I$. 
But $Y$ meets every co-meager subset of $\I$, so there is some $f \in Y$ with $\sup_{m \in \w} \big| \by_m(\bx_f^N) \big| = \infty$. 
\end{proof}

\begin{theorem}\label{thm:l1}
Every separable Banach space has a barrelled subspace with dimension $\leq\! \mathrm{non}(\M)$. 
In particular, $\barr \leq \mathrm{non}(\M)$.
\end{theorem}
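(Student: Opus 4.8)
\emph{Proof proposal.} The plan is to feed a suitably small non-meager subset of $\I$ into Theorem~\ref{thm:nonmeager}. Since every finite-dimensional space is already barrelled, we may assume $X$ is infinite-dimensional. By Observation~\ref{obs:norming}, $X$ has a countable norming set $N = \set{\be_i}{i\in\w}$, which is all that Theorem~\ref{thm:nonmeager} asks of $X$ beyond separability.

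Next I would locate a non-meager subset $Y$ of $\I$ with $|Y|\leq\mathrm{non}(\M)$ — in fact with $|Y|=\mathrm{non}(\M)$, though only the inequality is needed here. By definition there is a non-meager $A\sub\R$ with $|A|=\mathrm{non}(\M)$; replacing $A$ by $A\setminus\Q$ (still non-meager, as $\Q$ is meager) we may assume $A\sub\R\setminus\Q$. Because $\R\setminus\Q$ is a dense $G_\dlt$ (hence comeager) subset of $\R$, a subset of $\R\setminus\Q$ is meager in the subspace $\R\setminus\Q$ if and only if it is meager in $\R$; so $A$ is non-meager as a subspace of $\R\setminus\Q$. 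Transporting $A$ along the homeomorphism $\R\setminus\Q\cong\I$ yields the desired $Y$. (This is just the standard fact that $\mathrm{non}(\M)$ may be computed in any perfect Polish space, e.g.\ the Baire space; see \cite{Blass}.)

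Now apply Theorem~\ref{thm:nonmeager} to this $Y$: the subspace
$$Z \,=\, \mathrm{Span}\big(\set{\be_i}{i\in\w}\cup\set{\bx_f^N}{f\in Y}\big)$$
of $X$ is barrelled. It is spanned by a set of cardinality at most $\aleph_0+|Y|=\mathrm{non}(\M)$ (using $\mathrm{non}(\M)\geq\aleph_1$), so $\dim Z\leq\mathrm{non}(\M)$, which proves the first assertion. For the second assertion, take $X$ to be any infinite-dimensional separable Banach space (say $c_0$). Then $Z$ contains $\mathrm{Span}(N)$, which is dense in $X$ because the span of a norming set is dense; hence $Z$ is infinite-dimensional, since a finite-dimensional subspace is closed and therefore cannot be dense in the infinite-dimensional space $X$. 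Thus $Z$ is a normed barrelled space of infinite dimension $\leq\mathrm{non}(\M)$, whence $\barr\leq\dim Z\leq\mathrm{non}(\M)$.

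Given Theorem~\ref{thm:nonmeager}, there is essentially no obstacle; the only point requiring (routine) care is the identification of the least size of a non-meager subset of $\I$ with $\mathrm{non}(\M)$, i.e.\ the invariance of the meager ideal under passing between $\R$ and the Baire space.
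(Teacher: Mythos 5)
Your proposal is correct and follows essentially the same route as the paper: take a non-meager $Y\sub\I$ of size $\mathrm{non}(\M)$ (via the homeomorphism $\I\cong\w^\w\cong\R\setminus\Q$) and feed it into Theorem~\ref{thm:nonmeager}. The extra details you supply --- the transfer of non-meagerness along the embedding $\R\setminus\Q\sub\R$ and the check that $Z$ is infinite-dimensional (needed for the $\barr$ bound) --- are points the paper leaves implicit, and are handled correctly.
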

\begin{proof}
Let $X$ be a separable Banach space and let $N = \set{\be_i}{i \in \w}$ be a norming set for $X$. 
Because $\I$ is homeomorphic to the Baire space $\w^\w$, which in turn is homeomorphic to $\R \setminus \Q$, there is a non-meager subset of $\I$ with size $\mathrm{non}(\M)$. 
If $Y$ is such a set, then $Z = \mathrm{Span}\big(\! \set{\be_i}{i \in \w} \cup \big\{ \bx_f^N :\, f \in Y \big\} \big)$ barrelled, by the previous theorem, and $\mathrm{dim}(Z) \leq |Y \cup \set{\be_i}{i \in \w}| = \mathrm{non}(\M)$. 
From this, and the definition of $\barr$, it follows immediately that $\barr \leq \mathrm{non}(\M)$. 
\end{proof}

We end this section with an observation about rearrangements of the series $\sum_{n \in \w}\nicefrac{1}{3^n}$. 
Let $\ell^1$ is the Banach space of all absolutely summable sequences (in $\R$ or $\CC$ -- it does not matter for the following argument) and 
or each $i \in \w$, let $\be_i \in \ell^1$ denote the sequence that is equal to $1$ in coordinate $i$ and $0$ elsewhere. 
Note that $N = \set{\be_i}{i \in \w}$ is a norming set for $\ell^1$. 

In this context, we say that $\bx_f^N \in \ell^1$ is a \emph{rearrangement} of $\seq{\frac{1}{3^n}}{n \in \w}$ if $f$ is not merely injective, but bijective. 

\begin{observation}
$\textstyle \set{f \in \I}{f \text{ is a bijection}}$ 
is a dense $G_\dlt$ subset of $\I$.
\end{observation}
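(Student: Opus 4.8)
The plan is to verify the two defining properties separately, neither of which requires the elaborate machinery of Lemma~\ref{lem:MainLemma}. For the $G_\dlt$ part, I would use the fact that every $f \in \I$ is already injective, so $f$ is a bijection if and only if it is surjective, i.e. if and only if for every $k \in \w$ there is some $n \in \w$ with $f(n) = k$. For each fixed $k$, the set
$$V_k \,=\, \set{f \in \I}{k \in \mathrm{range}(f)} \,=\, \textstyle \bigcup_{n \in \w}\set{f \in \I}{f(n) = k}$$
is a union of basic clopen subsets of $\I$, hence open. The set of bijections is therefore $\bigcap_{k \in \w} V_k$, a $G_\dlt$ subset of $\I$.

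For density, I would argue directly. Given any basic open set $\tr{s}$ of $\I$, where $s : n \to \w$ is injective, it suffices to exhibit a bijection $f \in \w^\w$ extending $s$. Since $\w \setminus n$ and $\w \setminus \mathrm{range}(s)$ are both countably infinite, one may fix any bijection $g$ between them and set $f = s \cup g$; then $f$ is a bijection, $f \rest n = s$, so $f \in \tr{s}$. As $\tr{s}$ was an arbitrary basic open subset of $\I$, this shows the set of bijections is dense. (Alternatively, one can observe that each $V_k$ above is not merely open but dense in $\I$ — given $\tr{s}$ with $s : n \to \w$ injective and $k \notin \mathrm{range}(s)$, the function $s' = s \cup \{(n,k)\}$ is injective and $\tr{s'} \sub \tr{s} \cap V_k$ — and then invoke the Baire Category Theorem for the Polish space $\I$ to conclude that $\bigcap_{k \in \w} V_k$ is comeager, and in particular dense.)

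There is no substantive obstacle here; the only point requiring a moment's care is recognizing that it is surjectivity, rather than bijectivity directly, that should be written as a countable intersection of open sets — and this is immediate because membership in $\I$ already supplies injectivity.
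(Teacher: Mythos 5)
Your proof is correct and uses the same decomposition as the paper's: the set of bijections in $\I$ is $\bigcap_{k}V_k$ where $V_k=\set{f\in\I}{k\in\mathrm{range}(f)}$ is open, and your parenthetical alternative (each $V_k$ is dense, then apply the Baire Category Theorem in the Polish space $\I$) is exactly the argument the paper gives. Your primary density argument is a small variant worth noting: rather than showing each $V_k$ is dense and letting Baire deliver density of the intersection, you directly exhibit a bijection in every basic open set $\tr{s}$ by gluing $s$ to a bijection between $\w\setminus n$ and $\w\setminus\mathrm{range}(s)$; this is slightly more self-contained, since it separates the density claim from the category argument entirely. Either route is fine, and there is no gap.
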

\begin{proof}
For each $n \in \w$, observe that  
$$U_n \,=\, \set{f \in \I}{n \in \mathrm{Image}(f)} \,=\, \textstyle \bigcup \set{\tr{s}}{s \text{ is injective and } n \in \mathrm{Image}(s)}$$ 
is a nonempty open subset of $\I$. 
Given any nonempty open $V \sub \I$, fix some basic open $\tr{s} \sub V$. If $n \in \mathrm{Image}(s)$ then $\tr{s} \sub U_n \cap V$. If not, define $t: (|s|+1) \to \w$ by setting $t \rest |s| = s$ and $t(|s|) = n$; then $t$ is injective and $\tr{t} \sub U_n \cap V$. Either way, $U_n \cap V \neq \0$. As $V$ was arbitrary, this shows $U_n$ is dense in $\I$. 
Consequently, $\textstyle \set{f \in \I}{f \text{ is a bijection}} = \bigcap_{n \in \w}U_n$ is a dense $G_\dlt$ set. 
\end{proof}

It follows that in the statement of Theorem~\ref{thm:nonmeager} with $X = \ell^1$ and $N$ as described above, we may, without loss of generality, assume every $f \in X$ is bijective, so that each $\bx_f$ is a rearrangement. In other words, combining the above observation with the preceding proofs shows: 
\begin{itemize}
\item[$\circ$] There is a set $Y$ of rearrangements of $\seq{\frac{1}{3^n}}{n \in \w}$ such that $|Y| = \mathrm{non}(\M)$ and $Z = \mathrm{Span}\big(\! \set{\be_i}{i \in \w} \cup Y \big)$ is barrelled.
\end{itemize}
With a little more work, one may also show that if $\sup_{m \in \w}|\by_m(\be_i)| = \infty$, then 
$\set{f \in \I}{\sup_{m \in \w}|\by_m(\bx_f^N)| = \infty}$ is co-meager in $\I$. We omit the details of this, but point out that this observation can be used to improve the above to 
\begin{itemize}
\item[$\circ$] There is a set $Y$ of rearrangements of $\seq{\frac{1}{3^n}}{n \in \w}$ such that $|Y| = \mathrm{non}(\M)$ and $Z = \mathrm{Span}(Y)$ is barrelled.
\end{itemize}
In other words, the $X = \ell^1$ case of Theorem~\ref{thm:l1} is still true for the span of some $\mathrm{non}(\M)$ rearrangements of $\seq{\frac{1}{3^n}}{n \in \w}$.

Ruckle proved in \cite{Ruckle} that for any $\bx \in \ell^1$, the span of all rearrangements of $\bx$ is a barrelled space. This result was also derived as a consequence of a more general theorem by the second author in \cite{Stuart}. The above observation is in this vein, though it neither directly implies the Ruckle result (because it is unclear whether our theorem holds for arbitrary $\bx \in \ell^1$ in the place of $\seq{\frac{1}{3^n}}{n \in \w}$), nor is implied by it (because we need only $\mathrm{non}(\M)$ rearrangements to get a barrelled span).

\section{Generalization to all Banach spaces}\label{sec:generalization}

The goal of this section is to generalize the results of the last section to Banach spaces with arbitrary density character. The main change is that instead of dealing with the familiar Baire space $\w^\w$, we must deal with its less familiar generalization $D^\w$, where $D$ is a discrete space of uncountable size. 

Following the standard set-theoretic conventions, a cardinal $\k$ also denotes a set of size $\k$ (the set of all ordinal numbers $<\!\k$). 
Given an infinite cardinal $\k$, $\k^\w$ denotes the space of functions $\w \to \k$. 
The topology on $\k^\w$ is obtained by giving $\k$ the discrete topology and then putting the usual product topology on $\k^\w$. 
This makes $\k^\w$ a completely metrizable space of weight $\k$, sometimes known as the \emph{generalized Baire space} of weight $\k$ (e.g. in \cite{Engelking}). 
Just as with the usual Baire space $\w^\w$, the standard basis for this space consists of sets of the form 
$$\tr{s} \,=\, \set{f \in \k^\w}{ f \rest |s| = s},$$
where $s$ is a function $n \to \k$ for some $n \in \w$. 

Generalizing the space $\I$ from the previous section, let
$$\I_\k = \set{f \in \k^\w}{f \text{ is injective}}.$$ 
As before, $\I_\k$ is a $G_\dlt$ subset of $\k^\w$. To see this, observe that
$$U_n \,=\, \set{x \in \k^\w}{x \rest n \text{ is injective}} \,=\, \textstyle \bigcup \set{\tr{s}}{|s|=n \text{ and } s \text{ is injective}}$$
is open for every $n \in \w$, and $\I_\k = \bigcap_{n \in \w}U_n$. 
This implies $\I_\k$ is completely metrizable. 
(In fact, although this is not really needed here, one can show via \cite[Theorem 7.3.8]{Engelking} and \cite{Stone} that $\I_\k$ is homeomorphic to $\k^\w$ itself. This is analogous to the fact stated in the previous section that $\I$ is homeomorphic to $\w^\w$.) 
For convenience, if $s$ is an injective function $n \to \k$ then we write $\tr{s}$ rather than $\tr{s} \cap \I_\k$ to denote the basic open neighborhood of $\I_\k$. 

Let $X$ be a Banach space with density character $\leq\! \k$, and let $N = \set{\be_i}{i \in \k}$ be (an enumeration of) a norming set in $X$. Given $f \in \I_\k$, define 
$$\bx_f^N \,=\, \textstyle \sum_{n \in \w}\frac{1}{3^n}\be_{f(n)}.$$ 
Because $N$ contains only unit vectors, this sum converges and $\bx_f^N \in X$. 
Thus the map $f \mapsto \bx_f^N$ gives us a natural way to identify $\I_\k$ with a subset of $X$. 

The following lemma is analogous to Lemma~\ref{lem:MainLemma} in the previous section.

\begin{lemma}
Let $X$ be a Banach space, and let $N = \set{\be_i}{i \in \k}$ be a norming set in $X$. 
Suppose $\seq{\by_m}{m \in \w}$ is a sequence in $X'$ with $\sup_{m \in \w}\norm{\by_m} = \infty$, but $\sup_{m \in \w}|\by_m(\be_i)| < \infty$ for all $i \in \k$. Then
$$\textstyle \set{f \in \I_\k}{\sup_{m \in \w} \big| \by_m(\bx_f^N) \big| = \infty}$$
is a co-meager subset of $\I_\k$.
\end{lemma}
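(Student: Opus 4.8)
The plan is to follow the proof of Lemma~\ref{lem:MainLemma} essentially verbatim; the point is that although the norming set $N = \set{\be_i}{i \in \k}$ is now indexed by an uncountable cardinal, every vector $\bx_f^N = \sum_{n \in \w}\frac{1}{3^n}\be_{f(n)}$ still involves only the countably many vectors $\be_{f(0)},\be_{f(1)},\dots$, and the sequence $\seq{\by_m}{m \in \w}$ is still indexed by $\w$. So all the estimates from the earlier proof carry over word for word once we replace $\I$ by $\I_\k$ and use $\k$ rather than $\w$ as the range of the injective functions in play. In particular the finite injective sequences $s : n \to \k$ still index the basic open sets $\tr{s}$ of $\I_\k$, and that is all the combinatorics of the argument ever sees.

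Concretely: for $i \in \k$ set $B_i = \sup_{m \in \w}|\by_m(\be_i)|$, finite by hypothesis; for injective $s : n \to \k$ and $k \leq n$ set $M_k^s = \max\{k,\frac{3}{2}B_{s(0)},\dots,\frac{3}{2}B_{s(k-1)}\}$, which depends only on $s \rest k$, so for injective $f : \w \to \k$ there is a well-defined common value $M_k^f$. Say $f$ satisfies $(*)$ at $k$ if some $m \in \w$ has $|\by_m(\be_{f(k)})| > \norm{\by_m}-1 > k3^kM_k^f$, and put $U_n = \set{f \in \I_\k}{f \text{ satisfies }(*)\text{ at }k\text{ for at least }n\text{ values of }k}$. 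As before $U_n$ is open, being a union of sets $\tr{s}$, and the crux is that $U_n$ is dense. Given a basic open $\tr{s}$ with $s : k \to \k$ injective, use $\sup_{m}\norm{\by_m} = \infty$ to pick $m$ with $\norm{\by_m} > k3^kM_k^s + 1$, then use that $N$ is norming to pick $j \in \k$ with $|\by_m(\be_j)| > \norm{\by_m}-1$; since $|\by_m(\be_j)| > k3^kM_k^s \geq B_{s(i)} \geq |\by_m(\be_{s(i)})|$ for every $i < k$, the index $j$ is new, so extending $s$ by $s(k) = j$ keeps it injective and lands in $U_1 \cap \tr{s}$. Iterating this $n$ times inside an arbitrary $\tr{s_0}$ produces a decreasing chain of basic open sets ending in some $\tr{s_n} \sub U_n \cap \tr{s_0}$. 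Hence $\bigcap_{n \in \w}U_n$ is a dense $G_\dlt$ subset of the completely metrizable (hence Baire) space $\I_\k$, so it is co-meager.

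Finally, for $f \in \bigcap_n U_n$ one verifies $\sup_{m \in \w}|\by_m(\bx_f^N)| = \infty$ by the same chain of inequalities as in the proof of Lemma~\ref{lem:MainLemma}: whenever $f$ satisfies $(*)$ at $k$, witnessed by $m$, write $\by_m(\bx_f^N) = \by_m\big(\sum_{i<k}3^{-i}\be_{f(i)}\big) + \by_m\big(3^{-k}\be_{f(k)}\big) + \by_m\big(\sum_{i>k}3^{-i}\be_{f(i)}\big)$, bound the first term in absolute value by $M_k^f$ (by linearity and the definition of $M_k^f$), bound the third by $\frac{1}{2\cdot 3^k}\norm{\by_m}$, and use the inequalities in $(*)$ to conclude $|\by_m(\bx_f^N)| \geq \frac{2}{3}k - 1$; since this holds for arbitrarily large $k$, the supremum is infinite. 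I do not expect a genuine obstacle here: the only thing that could conceivably fail in passing from $\w$ to $\k$ is the Baire-category step, and that is already secured by the observation recorded just before the lemma that $\I_\k$ is a $G_\dlt$ subspace of the completely metrizable space $\k^\w$, hence completely metrizable and therefore Baire. The one point worth stating explicitly in the write-up is that the density argument uses the norming property of $N$ in the form $\norm{\by} = \sup_{\be \in N}|\by(\be)|$ to locate the new index $j$ inside the uncountable set $\k$.
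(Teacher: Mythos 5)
Your proposal is correct and matches the paper's own treatment exactly: the paper also proves this lemma by repeating the proof of Lemma~\ref{lem:MainLemma} with the same three cosmetic changes you identify (replacing $\I$ by $\I_\k$, letting the finite injective sequences take values in $\k$, and choosing the new index $j$ in $\k$ via the norming property). Your added remarks about why nothing breaks --- only countably many $\be_{f(n)}$ ever appear, and $\I_\k$ is completely metrizable hence Baire --- are accurate and consistent with the paper's setup.
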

\begin{proof}
The proof of this lemma is essentially identical to the proof of Lemma~\ref{lem:MainLemma}. 
Only three minor changes need to be made. First, every $\I$ in that proof should be replaced with an $\I_\k$. 
Second, in the two paragraphs following the definition of $U_n$, we have functions $s: k \to \w$ and $s_0: k \to \w$; but for the proof of the present lemma these should be functions $k \to \k$ instead. 
Finally, in the paragraph following the definition of $U_n$, we should choose $j \in \k$ rather than $j \in \w$. 
\end{proof}

\begin{theorem} 
Let $X$ be a Banach space, and let $N = \set{\be_i}{i \in \k}$ be a norming set for $X$. 
If $Y$ is a non-meager subset of $\I_\k$, then 
$$Z \,=\, \mathrm{Span}\big(\! \set{\be_i}{i \in \k} \cup \big\{ \bx_f^N :\, f \in Y \big\}\big)$$ 
is a barrelled subspace of $X$.
\end{theorem}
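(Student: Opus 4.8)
The plan is to repeat the proof of Theorem~\ref{thm:nonmeager} almost verbatim, with $\I_\k$ in place of $\I$ and the generalized lemma above in place of Lemma~\ref{lem:MainLemma}. First I would fix an arbitrary sequence $\seq{\by_m}{m \in \w}$ in $X'$ with $\sup_{m \in \w}\norm{\by_m} = \infty$; by the characterization of barrelledness recalled at the start of Section~\ref{sec:l1}, it suffices to exhibit some $\bx \in Z$ with $\sup_{m \in \w}|\by_m(\bx)| = \infty$.

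Next I would split into two cases. If $\sup_{m \in \w}|\by_m(\be_i)| = \infty$ for some $i \in \k$, then we are done immediately, since $\be_i \in Z$. Otherwise $\sup_{m \in \w}|\by_m(\be_i)| < \infty$ for every $i \in \k$, so the hypotheses of the generalized lemma are met and $\set{f \in \I_\k}{\sup_{m \in \w}|\by_m(\bx_f^N)| = \infty}$ is a co-meager subset of $\I_\k$. To finish, I would invoke the elementary fact that a non-meager set meets every co-meager set: were $Y$ disjoint from a co-meager $C \sub \I_\k$, then $Y$ would be contained in the meager set $\I_\k \setminus C$, contradicting the non-meagerness of $Y$. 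Applying this with $C$ the co-meager set supplied by the lemma produces some $f \in Y$ with $\sup_{m \in \w}|\by_m(\bx_f^N)| = \infty$, and $\bx_f^N \in Z$ by definition of $Z$. Since $\seq{\by_m}{m \in \w}$ was arbitrary, this shows $Z$ is barrelled.

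I do not anticipate any real obstacle: all of the analytic work --- estimating the tail of $\sum_n 3^{-n}\be_{f(n)}$ and arranging, at a dense set of coordinates $k$, a functional $\by_m$ with $|\by_m(\be_{f(k)})|$ simultaneously close to $\norm{\by_m}$ and enormous compared to $M^f_k$ --- has already been carried out inside the generalized lemma, whose proof differs from that of Lemma~\ref{lem:MainLemma} only in the size of the index set. The one point to keep in mind is that the argument quietly relies on $\I_\k$ being a Baire space, so that the co-meager sets appearing here are genuinely large; this is guaranteed because $\I_\k$ was shown above to be completely metrizable. (In fact even this is not strictly needed: the non-meagerness of $Y$ alone forces $Y$ to meet the co-meager set produced by the lemma.)
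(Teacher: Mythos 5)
Your proposal is correct and follows exactly the paper's argument: the paper simply states that the proof is identical to that of Theorem~\ref{thm:nonmeager}, which is the two-case argument you describe (either some $\be_i \in Z$ already detects the unboundedness, or the generalized lemma applies and the non-meager set $Y$ must meet the resulting co-meager set). Your added remark that non-meagerness of $Y$ alone suffices, without needing $\I_\k$ to be a Baire space, is a correct and harmless observation.
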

\begin{proof}
The proof of this theorem is identical to the proof of Theorem~\ref{thm:nonmeager}.
\end{proof} 

The proof of Theorem~\ref{thm:l1} generalizes to the present context less readily than those of Lemma~\ref{lem:MainLemma} and Theorem~\ref{thm:nonmeager}. The problem is that the proof of Theorem~\ref{thm:l1} relies on the fact that there is a meager subset of $\I$ with size $\mathrm{non}(\M)$. 
To generalize this, we need to know the minimum cardinality of a meager subset of $\I_\k$. 
This cardinal is known by recent work of the first author in \cite{Bideals}, but some further set theoretic preliminaries are needed to describe it. 

If $\k$ is an infinite cardinal, then $[\k]^\w$ denotes the set of all countably infinite subsets of $\k$. A family $\F \sub [\k]^\w$ is \emph{cofinal} if for every $A \in [\k]^\w$ there is some $B \in \F$ such that $B \supseteq A$. In other words, $\F \sub [\k]^\w$ is cofinal if it is (in the usual sense of the word) cofinal in the poset $([\k]^\w,\sub)$. Define
$$\cf[\k]^\w \,=\, \min \set{|\F|}{\F \text{ is a cofinal subset of }[\k]^\w}.$$ 
For example, $\cf[\w_1]^\w = \aleph_1$ because, identifying an ordinal with the set of its predecessors as usual, $\set{\a}{\w \leq \a < \w_1}$ is a cofinal subset of $[\w_1]^\w$. In fact, this example with $\w_1$ can be extended to show that $\cf[\w_n]^\w = \aleph_n$ for all $n \in \w$. 

It is not difficult to see that $\k \leq \cf[\k]^\w$ for any uncountable cardinal $\k$. Furthermore, a reasonably straightforward diagonalization argument can be used to show that $\k^+ \leq \cf[\k]^\w$ whenever $\k$ is a singular cardinal with cofinality $\w$. \emph{Shelah's Strong Hypothesis}, abbreviated \ssh, is the statement that these straightforward lower bounds are sharp:
\begin{itemize}
\item[$\ssh:$] $\qquad \qquad \qquad \quad  \cf[\k]^\w \,=\, \begin{cases}
\k &\text{ if } \cf(\k) > \w, \\
\k^+ &\text{ if } \cf(\k) = \w.
\end{cases}$
\end{itemize}
Let us note that \ssh is consistent with \zfc. In fact this is true in a rather strong sense: the failure of \ssh implies the consistency of certain large cardinal axioms, and therefore the consistency of \zfc$+\,\neg$\ssh is only provable in strong extensions of \zfc (see \cite{JMPS} or \cite{Bspec}).

\begin{theorem}\label{thm:l1+}
If $X$ is a Banach space with density character $\k$, then $X$ has a barrelled subspace with dimension at most $\mathrm{cf}[\k]^\w \cdot \mathrm{non}(\M)$. 
\end{theorem}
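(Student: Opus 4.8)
The plan is to obtain Theorem~\ref{thm:l1+} by feeding a smallest-possible non-meager subset of $\I_\k$ into the generalized form of Theorem~\ref{thm:nonmeager} (stated just above), where ``smallest possible'' is made precise by \cite{Bideals}. First I would dispose of the case $\k = \aleph_0$: then $X$ is separable and $\cf[\aleph_0]^\w \cdot \mathrm{non}(\M) = \mathrm{non}(\M)$, so the conclusion is exactly Theorem~\ref{thm:l1}. Thus from now on assume $\k$ is uncountable, which has the convenient consequence that $\k \leq \cf[\k]^\w$.

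By \cite{Bideals}, the least cardinality of a non-meager subset of the generalized Baire space $\k^\w$ is $\cf[\k]^\w \cdot \mathrm{non}(\M)$; transporting this along the homeomorphism $\I_\k \cong \k^\w$ noted earlier, fix a non-meager $Y \subseteq \I_\k$ with $|Y| = \cf[\k]^\w \cdot \mathrm{non}(\M)$. Since $d(X) = \k$, Observation~\ref{obs:norming} provides a norming set $N = \set{\be_i}{i \in \k}$ for $X$. The generalized Theorem~\ref{thm:nonmeager} then applies to $Y$ and shows that
\[
Z \,=\, \mathrm{Span}\big(\! \set{\be_i}{i \in \k} \cup \set{\bx_f^N}{f \in Y}\big)
\]
is a barrelled subspace of $X$, and $\mathrm{dim}(Z) \leq |\set{\be_i}{i \in \k} \cup \set{\bx_f^N}{f \in Y}| \leq \k + |Y| = \cf[\k]^\w \cdot \mathrm{non}(\M)$, the last equality using $\k \leq \cf[\k]^\w \leq \cf[\k]^\w \cdot \mathrm{non}(\M)$. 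This is the asserted bound.

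So the argument proper is very short, and its whole weight rests on the cardinal computation imported from \cite{Bideals}; that is the step I would expect to be the genuine obstacle, were one to attempt a self-contained treatment. The natural approach to the needed fact---that $\k^\w$ has a non-meager subset of size $\cf[\k]^\w \cdot \mathrm{non}(\M)$---is to fix a cofinal family $\F \subseteq [\k]^\w$ of size $\cf[\k]^\w$, identify each $A \in \F$ with $\w$ so that $A^\w$ becomes a closed nowhere dense copy of $\w^\w$ in $\k^\w$, choose a non-meager $Z_A \subseteq A^\w$ of size $\mathrm{non}(\M)$, and take $Y = \bigcup_{A \in \F} Z_A$; one would then need that every meager $M \subseteq \k^\w$ satisfies $M \cap A^\w$ meager in $A^\w$ for some $A \in \F$, which forces $Z_A \not\subseteq M$ and hence $Y \not\subseteq M$. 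Making this rigorous requires a combinatorial description of the meager ideal on $\k^\w$, and since that is exactly what \cite{Bideals} furnishes, the efficient course is to quote that result rather than redevelop it.
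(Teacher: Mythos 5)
Your proposal is correct and follows essentially the same route as the paper: both obtain a non-meager subset of $\I_\k$ of size $\cf[\k]^\w \cdot \mathrm{non}(\M)$ from \cite{Bideals} (the paper applies the cited theorem directly to $\I_\k$ as a completely metrizable space with a basis of size $\k$, while you transport it along the homeomorphism $\I_\k \cong \k^\w$) and then feed it into the generalized Theorem~\ref{thm:nonmeager} with the same cardinal arithmetic. The sketch you append of how the \cite{Bideals} result might be proved is extra, but the main argument matches the paper's.
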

\begin{proof} 
The space $\I_\k$ is completely metrizable, and it has a basis of size $\k$: in fact the standard basis, consisting of the sets of the form $\tr{s}$, has size $\k$. 
By \cite[Theorem 3.4]{Bideals}, this implies $\I_\k$ has a non-meager subset $Y$ of size $\mathrm{cf}[\k]^\w \cdot \mathrm{non}(\M)$. 
But then $Z = \mathrm{Span}\big(\! \set{\be_i}{i \in \k} \cup Y \big)$ is barrelled by the previous theorem, 
and $\mathrm{dim}(Z) \leq \big|\! \set{\be_i}{i \in \k} \cup Y \big| = \k \cdot \cf[\k]^\w \cdot \mathrm{non}(\M) = \mathrm{cf}[\k]^\w \cdot \mathrm{non}(\M)$.
\end{proof} 

Observe that $\cf[\w]^\w = 1$, because the family $\{\w\}$ is cofinal in $\cf[\w]^\w$, and thus setting $\k = \w$ in Theorem~\ref{thm:l1+} immediately yields the special case proved in the previous section.  

\begin{corollary}\label{cor:l1+}
For every cardinal number $\lambda$, there is a cardinal-preserving notion of forcing which forces $\continuum \geq \lambda$ and also forces that every Banach space with density character $<\! \continuum$ has a barrelled subspace with dimension $<\! \continuum$. 

In other words, it is consistent with arbitrarily large values of $\continuum$ that for every infinite $\k < \continuum$, every Banach space with density character $<\! \continuum$ has a barrelled subspace with dimension $<\! \continuum$. 
\end{corollary}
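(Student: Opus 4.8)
The plan is to pass to a Cohen model. I would work over a ground model $V\models\mathrm{GCH}$ (so in particular $V\models\ssh$ and $V\models\ch$), and, given $\lambda$, fix a cardinal $\nu\ge\lambda$ that is a \emph{limit cardinal of uncountable cofinality} -- for instance $\nu=\aleph_{\lambda+\w_1}$, since $\lambda+\w_1$ is a limit ordinal with $\cf(\lambda+\w_1)=\w_1$. Then I would force with $\mathbb P=\mathrm{Fn}(\nu,2)$, the poset of finite partial functions $\nu\to 2$, which adds $\nu$ Cohen reals. Since $\mathbb P$ is ccc it preserves all cardinals and cofinalities, so it is a cardinal-preserving notion of forcing as required, and the task reduces to checking that in $V[G]$ (for $G$ generic) the desired conclusion holds.

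Three facts about $V[G]$ need to be established. (i) $\continuum=\nu$: a standard nice-name count gives $2^{\aleph_0}=(\nu^{\aleph_0})^V$, and under GCH with $\cf(\nu)>\w$ one has $(\nu^{\aleph_0})^V=\nu$, whence $\continuum=\nu\ge\lambda$; since $\nu$ may be chosen arbitrarily large, this also yields the ``arbitrarily large $\continuum$'' clause. (ii) $\mathrm{non}(\M)=\aleph_1$: this is the familiar value of $\mathrm{non}(\M)$ in the Cohen model (see e.g.\ \cite{Blass}) -- the first $\w_1$ of the added Cohen reals form a non-meager set, because any meager set in $V[G]$ is coded in $V[G\rest a]$ for some countable $a\sub\nu$, and a Cohen real over $V[G\rest a]$ lies outside it. (iii) $\cf[\k]^\w<\continuum$ for every infinite $\k<\continuum=\nu$: I would use ccc-ness twice here. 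First, $\mathbb P$ has the $\aleph_1$-covering property, so every $A\in([\k]^\w)^{V[G]}$ is contained in some $B\in([\k]^\w)^V$; hence any family cofinal in $([\k]^\w)^V$ stays cofinal in $V[G]$, and $(\cf[\k]^\w)^{V[G]}\le(\cf[\k]^\w)^V$. Second, $\ssh$ in $V$ gives $(\cf[\k]^\w)^V\le\k^+$, and as $\nu$ is a limit cardinal and $\k<\nu$ we have $\k^+<\nu$; since cardinals are preserved, $(\cf[\k]^\w)^{V[G]}\le\k^+<\nu=\continuum$.

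Granting these, the corollary follows by applying Theorem~\ref{thm:l1+} inside $V[G]$: if $X$ is a Banach space with $d(X)<\continuum$ that is finite-dimensional, then $X$ itself is a barrelled subspace of dimension $<\aleph_0\le\continuum$; otherwise $d(X)=\k$ is an infinite cardinal $<\continuum$, and Theorem~\ref{thm:l1+} produces a barrelled subspace of dimension at most $\cf[\k]^\w\cdot\mathrm{non}(\M)$, which by (ii) and (iii) is the maximum of two cardinals each $<\continuum$ and hence is $<\continuum$ (when $\k=\aleph_0$ the bound is simply $\mathrm{non}(\M)$).

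I expect the main obstacle to be item (iii), which is fragile: it needs \emph{simultaneously} $\ssh$ in the ground model (so $\cf[\k]^\w\le\k^+$ -- this is why GCH, rather than an arbitrary model, is the starting point), $\nu$ a genuine limit cardinal (so that $\k<\nu$ forces $\k^+$ \emph{strictly} below $\nu$), and a forcing with the $\aleph_1$-covering property (so that $\cf[\k]^\w$ is not enlarged). Each of these is individually easy, but they must be arranged together; dropping any one breaks the argument -- e.g.\ forcing with $\mathrm{Fn}(\nu^+,2)$ would make $\continuum=\nu^+$, and a Banach space of density character $\nu$ would then receive only the useless bound $\cf[\nu]^\w\le\nu^+=\continuum$.
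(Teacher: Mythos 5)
Your proof is correct and follows essentially the same route as the paper: add $\nu$ Cohen reals for a suitably chosen $\nu \geq \lambda$, use ccc-ness to preserve cardinals and to keep $\cf[\k]^\w$ from growing, note that $\mathrm{non}(\M) = \aleph_1$ in the extension, and apply Theorem~\ref{thm:l1+}. The only real difference is the choice of $\nu$: you assume GCH in the ground model and take a limit cardinal of uncountable cofinality, whereas the paper works over an arbitrary ground model, taking $\nu$ to be a $\beth$-fixed point of uncountable cofinality (or, under \ssh, one of $\lambda,\lambda^+,\lambda^{++}$) --- this matters only if one reads the first sentence of the corollary as a \zfc theorem about every ground model; for the consistency statement your GCH hypothesis is harmless.
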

\begin{proof}
Fix $\lambda$, and fix some $\mu \geq \lambda$ such that $\cf(\mu) > \w$, and such that $\cf[\k]^\w < \mu$ whenever $\k < \mu$. (Assuming \ssh, this is true for some $\mu \in \{\lambda,\lambda^+,\lambda^{++}\}$. In \zfc, it is true for any $\beth$-fixed point above $\lambda$ with uncountable cofinality.) Then let $\mathbb P$ be the usual notion of forcing to add $\mu$ Cohen reals. 
Because $\mathbb P$ has the ccc, it does not change the value of $\cf[\k]^\w$ for any cardinal $\k$. Furthermore, $\mathbb P$ forces $\mathrm{non}(\M) = \aleph_1$. By our choice of $\mu$, it follows that $\mathrm{cf}[\k]^\w \cdot \mathrm{non}(\M) = \mathrm{cf}[\k]^\w < \mu$ for any uncountable $\k < \mu$ in the forcing extension. Thus the corollary follows from Theorem~\ref{thm:l1+}.
\end{proof}

\section{How small can a barrelled subspace be?}\label{sec:random}

We say that a Banach space $X$ \emph{contains} a Banach space $Y$ if there is an injective bounded linear map $\phi: Y \to X$, or equivalently, if $Y$ is linearly homeomorphic to a (closed) subspace of $X$. 
Recall that $\phi$ is \emph{bounded} if there is some $b > 0$ such that $\norm{\phi(\bx)} \leq b \norm{\bx}$ for all $\bx \in X$, and that when this is the case, then by the Open Mapping Theorem, there is some $b' > 0$ such that $\frac{1}{b} \norm{\bx} \leq \norm{\phi(\bx)}$ for all $\bx \in X$. 
Thus a linear map $\phi$ is bounded if and only if there is some $b > 0$ such that $\frac{1}{b} \norm{\bx} \leq \norm{\phi(\bx)} \leq b \norm{\bx}$ for all $\bx \in X$.

In this section we prove that any Banach space whose dual contains either $c_0$ or $\ell^p$ for some $p \geq 1$ has no barrelled subspace with dimension $<\!\mathrm{cov}(\NN)$. 

Let $X$ be a Banach space, and let $X'$ denote its dual. 
A sequence $\seq{\by_m}{m \in \w}$ in $X'$ is \emph{norm-unbounded} if $\sup_{m \in \w}\norm{\by_m} = \infty$. 
Let us say that $\bx \in X$ \emph{detects the norm-unboundedness} of a sequence $\seq{\by_m}{m \in \w}$ if $\sup_{m \in \w} \big| \by_m(\bx) \big| = \infty$. 
Let us say that $Y \sub X$ \emph{detects the norm-unboundedness of all functional sequences} if for any norm-unbounded sequence $\seq{\by_m}{m \in \w}$ in $X'$, there is some $\bx \in Y$ that detects the norm-unboundedness of $\seq{\by_m}{m \in \w}$.

Recall that a subspace $Z$ of $X$ is barrelled if and only if $Z$ detects the norm-unboundedness of all functional sequences.  

\begin{lemma}\label{lem:basis}
Suppose $Z$ is a barrelled subspace of a Banach space $X$. 
If $Y \sub X$ and $Z = \mathrm{Span}(Y)$, then $Y$ detects the norm-unboundedness of all functional sequences. 
\end{lemma}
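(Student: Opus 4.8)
The plan is to argue by contraposition: I will suppose that $Y$ fails to detect the norm-unboundedness of some functional sequence $\seq{\by_m}{m\in\w}$, and show that $Z=\mathrm{Span}(Y)$ then also fails to detect it, contradicting the assumption that $Z$ is barrelled. So fix a norm-unbounded sequence $\seq{\by_m}{m\in\w}$ in $X'$ with the property that $\sup_{m\in\w}|\by_m(\bx)| < \infty$ for every $\bx \in Y$. The goal is to promote this bound from $Y$ to all of $\mathrm{Span}(Y)$.

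The key step is the observation that ``being detected by $\bx$'' behaves well under linear combinations. If $\bx = c_1\bx_1 + \dots + c_n\bx_n$ with each $\bx_j \in Y$, then by linearity of each $\by_m$ and the triangle inequality,
$$
|\by_m(\bx)| \,\leq\, |c_1|\,|\by_m(\bx_1)| + \dots + |c_n|\,|\by_m(\bx_n)| \,\leq\, \big(|c_1| + \dots + |c_n|\big)\cdot \max_{1\leq j\leq n}\sup_{k\in\w}|\by_k(\bx_j)|,
$$
and the right-hand side is a finite constant not depending on $m$ (using that each $\bx_j \in Y$, so each inner supremum is finite). Hence $\sup_{m\in\w}|\by_m(\bx)| < \infty$ for every $\bx \in \mathrm{Span}(Y) = Z$. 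In particular no $\bx \in Z$ detects the norm-unboundedness of $\seq{\by_m}{m\in\w}$, so $Z$ does not detect the norm-unboundedness of all functional sequences. By the characterization recalled just before the lemma, $Z$ is not barrelled, a contradiction.

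This argument is essentially elementary; there is no real obstacle. The only point requiring a moment's care is the bookkeeping that a member of $\mathrm{Span}(Y)$ is a \emph{finite} linear combination of elements of $Y$, so that the max over $j$ above is over a finite set and the bound is genuinely finite — but this is immediate from the definition of the span. One could alternatively phrase the proof directly rather than by contraposition: given a norm-unbounded sequence, use that $Z$ is barrelled to find $\bx \in Z$ detecting it, write $\bx$ as a finite combination of elements of $Y$, and observe via the same inequality that at least one of those elements must itself detect the sequence (since if all of them gave finite suprema, so would $\bx$). Either phrasing works; I would use the contrapositive form for brevity.
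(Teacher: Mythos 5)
Your proposal is correct and follows essentially the same route as the paper's proof: contraposition plus the triangle inequality applied to a finite linear combination of elements of $Y$. (If anything, your use of $|c_j|$ is slightly more careful than the paper's own bookkeeping.)
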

\begin{proof}
Fix $Y \sub X$ with $Z = \mathrm{Span}(Y)$. 
Our goal is to show that if $Y$ does not detect the norm-unboundedness of every functional sequence, then neither does $Z$.

To this end, suppose $\seq{\by_m}{m \in \w}$ is a norm-unbounded sequence in $X'$ such that no $\bx \in Y$ detects the norm-unboundedness of $\seq{\by_m}{m \in \w}$. 
Let $\bz \in Z$, and fix $\bx_1,\dots,\bx_n \in Y$ and $c_1,\dots,c_n$ such that $\bz = c_1\bx_1 + \dots + c_n\bx_n$. 
For each $i \leq n$, our assumption on $Y$ implies $\sup_{m \in \w}|\by_m(\bx_i)| < \infty$; let $B_i = c_i \sup_{m \in \w}|\by_m(\bx_i)|$. 
Because each of the functionals $\by_m$ is linear, 
$$\textstyle |\by_m(\bz)| \,=\, \big| \sum_{i \leq n}c_i\by_m(\bx_i) \big| \,\leq\, \sum_{i \leq n}c_i|\by_m(\bx_i)| \,\leq\, \sum_{i \leq n}c_iB_i.$$
Thus $\sup_{m \in \w}|\by_m(\bz)| \,\leq\, \sum_{i \leq n}c_iB_i$, which means that $\bz$ does not detect the norm-unboundedness of $\seq{\by_m}{m \in \w}$. 
\end{proof}

The main result of this section, the second main Theorem stated in the introduction, is proved differently for spaces whose dual contains $\ell^1$ and for spaces whose dual contains $c_0$ or $\ell^p$ for some $p > 1$. We handle the $\ell^1$ case first. 

\begin{lemma}\label{lem:containsl1}
Suppose $Y$ is a Banach space containing $\ell^1$. Then there is a sequence $\seq{\be_i}{i \in \w}$ of unit vectors in $Y$, and some $\dlt > 0$, such that 
$$\textstyle \bignorm{ \sum_{i=1}^n f(i)\be_i } \,\geq\, \dlt n$$
for every function $f: \{1,2,\dots,n\} \to \{-1,1\}$.
\end{lemma}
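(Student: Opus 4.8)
The plan is to transport the standard unit-vector basis of $\ell^1$ into $Y$, renormalize it to unit vectors, and then exploit the fact that in $\ell^1$ the norm of a sum of coordinate vectors is insensitive to signs. Concretely, I would begin by unpacking the hypothesis ``$Y$ contains $\ell^1$'': by the equivalence recorded at the start of this section, there is a linear map $\phi \colon \ell^1 \to Y$ and a constant $b > 0$ such that $\frac{1}{b}\norm{x}_1 \leq \norm{\phi(x)} \leq b\norm{x}_1$ for every $x \in \ell^1$. Let $u_i \in \ell^1$ denote the sequence with a $1$ in coordinate $i$ and $0$ elsewhere, so $\norm{u_i}_1 = 1$ and hence $\frac{1}{b} \leq \norm{\phi(u_i)} \leq b$.

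Next I would define the desired vectors by renormalizing: set $\be_i = \frac{1}{\norm{\phi(u_i)}}\phi(u_i)$, a unit vector in $Y$, and write $c_i = \frac{1}{\norm{\phi(u_i)}}$, so that $\frac{1}{b} \leq c_i \leq b$ and $\be_i = c_i \phi(u_i) = \phi(c_i u_i)$. The point of working with $\ell^1$ is that for any $f \colon \{1,2,\dots,n\} \to \{-1,1\}$ we have $\sum_{i=1}^n f(i)\be_i = \phi\big(\sum_{i=1}^n f(i) c_i u_i\big)$, and since the vectors $c_i u_i$ have disjoint supports,
$$
\bignorm{\textstyle\sum_{i=1}^n f(i) c_i u_i}_1 \,=\, \sum_{i=1}^n |f(i)| \, c_i \,=\, \sum_{i=1}^n c_i \,\geq\, \frac{n}{b}.
$$
Applying the lower bound $\frac{1}{b}\norm{x}_1 \leq \norm{\phi(x)}$ with $x = \sum_{i=1}^n f(i)c_i u_i$ then gives $\bignorm{\sum_{i=1}^n f(i)\be_i} \geq \frac{1}{b}\cdot\frac{n}{b} = \frac{n}{b^2}$, so the lemma holds with $\dlt = \frac{1}{b^2}$.

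I do not expect a genuine obstacle here: the only thing to be careful about is the renormalization, since $\phi(u_i)$ need not already be a unit vector, and the renormalization constants $c_i$ vary with $i$. This is precisely the place where the $\ell^1$ hypothesis does the work, because absorbing the $c_i$ into the argument of $\phi$ costs nothing in $\ell^1$-norm (the norm of $\sum f(i) c_i u_i$ is just $\sum c_i$, independent of the signs $f(i)$), and the $c_i$ are uniformly bounded below by $\frac{1}{b}$. If one wanted to avoid renormalization altogether one could instead take $\be_i = \phi(u_i)$ and absorb the resulting non-unit norms into $\dlt$, but since the statement asks for unit vectors the renormalized version above is the cleanest route.
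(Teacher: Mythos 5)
Your proposal is correct and follows essentially the same route as the paper: push the coordinate vectors of $\ell^1$ through the embedding $\phi$, renormalize to unit vectors, and use the lower isomorphism bound $\frac{1}{b}\norm{\cdot}_1 \leq \norm{\phi(\cdot)}$ together with the sign-insensitivity of the $\ell^1$-norm to get the constant $\dlt = \frac{1}{b^2}$. The only cosmetic difference is that the paper proves the bound for arbitrary scalars $c_1,\dots,c_n$ and then specializes to signs, while you work with the signs directly.
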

\begin{proof}
Let $\phi: \ell^1 \to Y$ be an injective bounded linear map, and fix $b > 0$ such that $\frac{1}{b} \norm{\bx} \leq \norm{\phi(\bx)} \leq b \norm{\bx}$ for all $\bx \in \ell^1$. 
For each $i \in \w$, let $\be^0_i$ be the sequence in $\ell^1$ with a $1$ in coordinate $i$ and a $0$ in every other coordinate, and let $\be_i = \frac{1}{\norm{\phi(\be_i^0)}}\phi(\be_i^0)$. 

Clearly $\seq{\be_i}{i \in \w}$ is a sequence of unit vectors in $Y$, and if $c_1,c_2,\dots,c_n$ is any choice of scalars, then 
\begin{align*}
\textstyle \bignorm{ \sum_{i=1}^n c_i\be_i } &\,=\, \textstyle \bignorm{ \sum_{i=1}^n \frac{c_i}{\norm{\phi(\be_i^0)}}\phi(\be_i^0) } \,=\, \bignorm{ \phi \big( \sum_{i=1}^n \frac{c_i}{\norm{\phi(\be_i^0)}}\be_i^0 \big) } \\
&\,\geq\, \textstyle \frac{1}{b} \bignorm{ \sum_{i=1}^n \frac{c_i}{\norm{\phi(\be_i^0)}}\be_i^0 } \,=\, \textstyle \frac{1}{b} \sum_{i=1}^n \frac{|c_i|}{\norm{\phi(\be_i^0)}} \,\geq\, \frac{1}{b^2} \sum_{i=1}^n |c_i|.
\end{align*}
In particular, setting $\dlt = \frac{1}{b^2}$, the lemma just asserts the special case in which $c_i = f(i) \in \{-1,1\}$ for all $i \leq n$. 
\end{proof}

\begin{theorem}\label{thm:Random1}
Let $X$ be a Banach space with the property that its dual space $X'$ contains $\ell^1$. 
If $Y \sub X$ detects the norm-unboundedness of all functional sequences, then $|Y| \geq \mathrm{cov}(\NN)$. 
\end{theorem}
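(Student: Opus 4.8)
The plan is to use a probabilistic argument: build a family of norm-unbounded sequences in $X'$ indexed by points of a Lebesgue probability space, and show that any fixed $\bx \in X$ detects the norm-unboundedness of only a null set's worth of these sequences. If $|Y| < \mathrm{cov}(\NN)$, then the union over $\bx \in Y$ of these null sets cannot cover the whole probability space, so some sequence is detected by no member of $Y$, contradicting the hypothesis on $Y$.

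First I would apply Lemma~\ref{lem:containsl1} to $X'$ (which contains $\ell^1$) to get a sequence $\seq{\be_i}{i \in \w}$ of unit vectors in $X'$ and a $\dlt > 0$ such that $\bignorm{\sum_{i=1}^n f(i)\be_i} \geq \dlt n$ for every $f : \{1,\dots,n\} \to \{-1,1\}$. The probability space will be $\Omega = \{-1,1\}^\w$ with the uniform (coin-flipping) product measure, which is isomorphic as a measure space to $[0,1]$ with Lebesgue measure, so that $\mathrm{cov}(\NN)$ computed there agrees with the usual one. To an $\w$ in $\Omega$ I would associate the ``random walk'' partial sums $\by_m^\w = \sum_{i=1}^{m} \w(i)\be_i \in X'$; by the lemma, $\norm{\by_m^\w} \geq \dlt m$, so $\seq{\by_m^\w}{m \in \w}$ is norm-unbounded for every $\w$. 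Now fix $\bx \in X$. For each $i$, $\be_i(\bx)$ is a fixed scalar; the real part (and imaginary part, in the complex case) of $\by_m^\w(\bx) = \sum_{i \le m}\w(i)\be_i(\bx)$ is a sum of independent mean-zero bounded random variables (scaled Rademacher variables). The key step is to show that for almost every $\w$, the sequence $\seq{\by_m^\w(\bx)}{m \in \w}$ is bounded, i.e. $\bx$ does \emph{not} detect the norm-unboundedness of $\seq{\by_m^\w}{m \in \w}$. Equivalently, the set $D_\bx = \set{\w \in \Omega}{\sup_m |\by_m^\w(\bx)| = \infty}$ is Lebesgue-null.

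To see $D_\bx$ is null, the cleanest route is the Kolmogorov three-series theorem (or just the $L^2$-martingale convergence theorem): since $\bx \in X$ is bounded and the $\be_i$ are unit vectors, $\sum_i |\be_i(\bx)|^2 \le \sum_i \norm{\be_i}^2 \norm{\bx}^2$ — wait, that diverges; instead I would use that $\be_i(\bx)$ need not be summable or square-summable in general. The correct observation is more delicate: there is no reason $\sum_i \be_i(\bx)$ converges for a fixed $\bx$. So the argument must be subtler. The resolution I would pursue: the partial sums $S_m = \sum_{i \le m}\w(i)\be_i(\bx)$ form a martingale, and $\mathbb{E}|S_m|^2 = \sum_{i \le m}|\be_i(\bx)|^2$. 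This is bounded iff $\sum_i |\be_i(\bx)|^2 < \infty$, which we do not have in general — so instead I expect the theorem actually needs a sharper choice of the walk, perhaps weighting $\be_i$ by a summable sequence $a_i$ so that $\sum a_i \be_i(\bx)$ is controlled while $\norm{\sum_{i\le m} \pm a_i \be_i} \ge \dlt \sum_{i \le m} a_i \to \infty$ still holds via the $\ell^1$ lower bound. With weights $a_i = 1$ the $\ell^1$ estimate already gives linear growth of norms, and the point is that $|\be_i(\bx)| \le \norm{\bx}$ is all we know, so $\sum |\be_i(\bx)|$ can diverge; hence I would instead invoke the \emph{strong law of large numbers}: $\frac{1}{m} S_m \to 0$ a.s. when the $|\be_i(\bx)|$ are uniformly bounded, which is enough, because it forces $|\by_m^\w(\bx)| = o(m)$ a.s.\ for each $\bx$ — but that is weaker than boundedness and does not immediately prevent $\sup_m$ from being infinite. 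So the genuine main obstacle is obtaining, for each fixed $\bx$, a \emph{null} set of ``bad'' $\w$; I anticipate this requires the Hewitt–Savage $0$–$1$ law together with a careful variance/maximal-inequality estimate, exploiting that $\be_i(\bx) \to 0$ is \emph{not} assumed but $\sup_i |\be_i(\bx)| < \infty$ is, possibly after passing to a subsequence of the $\be_i$ along which $\be_i(\bx)$-type quantities are controlled simultaneously for a dense set of $\bx$'s — though since $Y$ is not countable this last move is unavailable, and the honest argument must handle each $\bx$ by a raw martingale/$0$–$1$-law computation. Granting that $D_\bx$ is null for every $\bx \in X$, the conclusion is immediate: if $|Y| < \mathrm{cov}(\NN)$ then $\bigcup_{\bx \in Y} D_\bx \ne \Omega$, so picking $\w$ outside this union, $\seq{\by_m^\w}{m \in \w}$ is a norm-unbounded functional sequence that no $\bx \in Y$ detects, contradicting the hypothesis. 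Hence $|Y| \ge \mathrm{cov}(\NN)$.
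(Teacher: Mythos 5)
Your overall strategy --- indexing norm-unbounded sequences in $X'$ by points of a Lebesgue probability space, showing each fixed $\bx$ detects only a null set of them, and using $|Y| < \mathrm{cov}(\NN)$ to find an undetected sequence --- is exactly the paper's, and your use of Lemma~\ref{lem:containsl1} and of $\Omega = \{-1,1\}^\w$ with the coin-flipping measure is also right. But there is a genuine gap at the one step you flag yourself and then leave unresolved: with the unnormalized walk $\by_m^\w = \sum_{i=1}^m \w(i)\be_i$, the set $D_\bx$ is in general \emph{not} null. Indeed $\mathbb{E}|S_m|^2 = \sum_{i \le m}|\be_i(\bx)|^2$, and whenever this diverges (which can happen, since all we know is $|\be_i(\bx)| \le \norm{\bx}$), the law of the iterated logarithm (or the central limit theorem plus a zero--one law) gives $\sup_m |S_m| = \infty$ almost surely, so $D_\bx$ is co-null rather than null. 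No appeal to Hewitt--Savage or a maximal inequality rescues the unnormalized walk, and your alternative of summable weights $a_i$ fails for the reason you half-notice: if $\sum_i a_i < \infty$ then $\dlt\sum_{i\le m}a_i$ stays bounded, so the norms need not blow up.

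The missing idea is a renormalization exploiting the mismatch between two growth rates: the $\ell^1$ lower bound makes $\bignorm{\sum_{i=1}^{n}\pm\be_i}$ grow \emph{linearly} in the number $n$ of steps, while independence makes $\sum_{i=1}^{n}\w(i)\be_i(\bx)$ grow only like $\sqrt{n}$ in mean square. The paper sets $\by^f_m = \frac{1}{m^3}\sum_{i=1}^{m^4} f(i)\be_i$; then $\norm{\by^f_m} \ge \frac{1}{m^3}\dlt m^4 = \dlt m \to \infty$, while for fixed $\bx$ the variance of $\sum_{i=1}^{m^4}f(i)\be_i(\bx)$ is at most $m^4\norm{\bx}^2$, so Chebyshev gives $P\big(|\by^f_m(\bx)| \ge \norm{\bx}\big) \le \nicefrac{1}{m^2}$, which is summable, whence $\sup_m|\by^f_m(\bx)| < \infty$ almost surely. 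Any scaling of an $n$-step walk by $n^{\alpha}$ with $\frac{1}{2} < \alpha < 1$ would do. Without some such rescaling your argument cannot close, so as written the proposal does not prove the theorem.
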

\begin{proof}
Suppose $X$ is a Banach space such that $X'$ contains $\ell^1$. Applying Lemma~\ref{lem:containsl1}, fix a sequence $\seq{\be_i}{i \in \w}$ of unit vectors in $X'$ and $\dlt > 0$ such that 
for every function $f: \{1,2,\dots,n\} \to \{-1,1\}$, 
$\textstyle \bignorm{ \sum_{i=1}^n f(i)\be_i } \,\geq\, \dlt n$.

Our strategy is to define an injective mapping from a probability space $\Omega$ to norm-unbounded sequences in $X'$. 
This gives some meaning to the notion of a ``random'' norm-unbounded sequence in $X'$. 
We then show that a given $\bx \in X$ almost surely does not detect the unboundedness of a ``random" sequence in $X'$. 
Once established, this fact can be used to prove the desired inequality. 

Let $\Omega = \prod_{i \in \w}\{-1,1\}$, the space of all functions $\w \to \{-1,1\}$. 
We may (and do) view $\Omega$ as a probability space with the standard Lebesgue measure $\mu$, so that 
$$\mu \big(\! \set{s \in \Omega}{f(i)=0} \!\big) \,=\, \mu \big(\! \set{f \in \Omega}{s(i)=1} \!\big) \,=\, \textstyle \frac{1}{2}$$
for all $i \in \w$. To emphasize the probabilistic viewpoint, let 
$$P\big( \varphi(x) \big) \,=\, \mu \big(\! \set{f \in \Omega}{\varphi(f)} \!\big)$$
whenever $\varphi(x)$ is some formula/property defining a subset of $\Omega$. 
The relevant properties of $\mu$ are:
\begin{itemize}
\item[$\circ$] As a probability measure, $\mu$ is countably (sub)additive. In particular, 
$$P\big( \varphi_m(f) \text{ holds for some $m \in A$} \big) \,\leq\,\textstyle \sum_{m \in A}P\big( \varphi_m(f) \big)$$
whenever $A \sub \N$ and $\varphi_m(x)$ is some formula/propety defining a subset of $\Omega$ for every $m \in A$.
\item[$\circ$] The measure space $(\Omega,\mu)$ is an atomless probability Borel measure on a compact metrizable space. 
All such measure spaces are Borel-isomorphic to the Lebesgue measure on the unit interval $[0,1]$ \cite[Theorem 17.41]{Kechris}. 
In particular, it follows that if $\F$ is a family of $\mu$-null subsets of $\Omega$ and $|\F| < \mathrm{cov}(\NN)$, then $\bigcup \F \neq \Omega$.
\end{itemize}

For each $f \in \Omega$ and $m \in \w$, define 
$$\by_m^f \,=\, \textstyle \frac{1}{m^3}\sum_{i=1}^{m^4} f(i)\be_i$$
In other words, $\by^f_m$ is the result of a random walk in $X'$, with $m^4$ steps, where at step $i$ our walk goes in the direction of either $\be_i$ or $-\be_i$. 
For each $f \in \Omega$, 
$$\norm{\by_m^f} \,=\,\textstyle \frac{1}{m^3}\bignorm{ \sum_{i=1}^{m^4} f(i)\be_i } \,\geq\, \frac{1}{m^3}\dlt m^4 \,=\, \dlt m.$$
It follows that $\sup_{m \in \w}\norm{\by^f_m}  = \infty$ for every $f \in \Omega$. 
In other words, the mapping $f \mapsto \seq{\by^f_m}{m \in \w}$ gives us a way of associating the members of $\Omega$ to norm-unbounded sequences in $X'$. 

\begin{claim} 
For any given $\bx \in X$, 
$$P \big( \bx \text{ detects the norm-unboundedness of }\seq{\by^f_m}{m \in \w} \!\big) \,=\, 0.$$
\end{claim}

\noindent \emph{Proof of the claim:} 
Fix $\bx \in X$ and $m \in \N$. For convenience, let $x_i = \be_i(\bx)$ for each $i \leq m^4$, so that $\by^f_m(\bx) = \frac{1}{m^3}\sum_{i=1}^{m^4} f(i)x_i$ for each $f \in \Omega$. Because $\norm{\be_i} = 1$, $|x_i| \leq \norm{\bx}$ for each $i$, and in particular $\sum_{i=1}^{m^4} x_i^2 \leq m^4\norm{\bx}^2$. 

For each $i \leq m^4$, we may (and do) consider the value of each $f(i)x_i$ to be a random variable, taking each of the values $\pm x_i$ with probability $\nicefrac{1}{2}$. 
The variance of this random variable is $x_i^2$. 
The key fact we need from elementary probability theory is that for independent random variables (which these are), the variance is additive. 
This means the variance of the value of $\sum_{i=1}^{m^4} f(i)x_i$ (as $f$ varies) is $\sum_{i=1}^{m^4} x_i^2 \leq m^4\norm{\bx}^2$, and therefore standard deviation is $\leq\! m^2\norm{\bx}$. 
Furthermore, the mean of $\sum_{i=1}^{m^4} f(i)x_i$ (as $f$ varies) is $0$, by symmetry. 
By Chebyshev's inequality,  
$$\textstyle P \Big( \big| \sum_{i=1}^{m^4} f(i)x_i \big| \geq km^2\norm{\bx} \,\Big) \,\leq\, \nicefrac{1}{k^2}$$
for any $k \in \N$. Setting $k = m$ and scaling by $\frac{1}{m^3}$, it follows that 
$$\textstyle P \big( | \by^f_m(\bx) | \geq {\norm{\bx}} \,\big) \,=\, P \Big( \big| \frac{1}{m^3}\sum_{i=1}^{m^4} f(i)x_i \big| \geq {\norm{\bx}} \,\Big) \,\leq\, \nicefrac{1}{m^2}.$$
(We note that better bounds are possible, for example via the Azuma-Hoeffding inequality, but the coarser Chebyshev bound suffices to prove the claim.)

Let $\e > 0$. 
Because $\sum_{m \in \w} \frac{1}{m^2}$ converges, there is some $N \in \N$ such that $\sum_{m \geq N} \frac{1}{m^2} < \e$. 
Observe that if $m < N$ then 
$$|\by^f_m(\bx)| \,\leq\, \norm{\by^f_m}\norm{\bx} \,\leq\, \dlt m\norm{x} \,<\, \dlt mN\norm{\bx}.$$
Combining this with the previous paragraph, 
\begin{align*}
\textstyle P \big( \sup_{m \in \w}|\by_m^f(\bx)| = \infty \big) &\,\leq\, P \Big( \textstyle \sup_{m \in \w}|\by_m^f(\bx)| > \max\big\{ \dlt mN\norm{\bx},{\norm{\bx}} \big\} \Big) \\
&\,\leq\,\textstyle P \big( |\by_m^f(\bx)| > {\norm{\bx}} \text{ for some } m \geq N \big) \\
&\,\leq\, \textstyle \sum_{m \geq N} P \big( |\by_m^f(\bx)| \geq {\norm{\bx}} \big) \\
&\,<\, \textstyle \sum_{m \geq N} \frac{1}{m^2} \\ &\,<\, \e.
\end{align*}
As $\e$ was arbitrary, this implies $P \big( \sup_{m \in \w}|\by_m^f(\bx)| = \infty \big) = 0$.
\hfill $\dashv$

\vspace{2.5mm}

To finish the proof of the theorem, let $Y \sub X$. Observe that 
\begin{align*}
&\set{f \in \Omega}{ \text{$Y$ detects the norm-unboundedness of }\seq{y^f_m}{m \in \w} } \\
& \qquad \,=\, \textstyle \bigcup_{\bx \in Y} \set{f \in \Omega}{ \text{$\bx$ detects the norm-unboundedness of }\seq{y^f_m}{m \in \w} }.
\end{align*}
By the claim, this is a size-$|Y|$ union of $\mu$-null sets. I $|Y| < \mathrm{cov}(\NN)$ then, because $\mu$ is Borel isomorphic to the standard Lebesgue measure on $[0,1]$, 
$$\textstyle \bigcup_{\bx \in Y} \set{f \in \Omega}{ \text{$\bx$ detects the norm-unboundedness of }\seq{y^f_m}{m \in \w} } \,\neq\, \Omega.$$ 
In particular, if $|Y| < \mathrm{cov}(\NN)$ then $Y$ does not detect the norm-unboundedness of every sequence of the form $\seq{\by^f_m}{m \in \w}$, and so $Y$ does not detect the norm-unboundedness of all functional sequences.
\end{proof}

\begin{theorem}\label{thm:RandomWalks}
If $X$ is a Banach space whose dual contains $\ell^1$, then every barrelled subspace of $X$ has dimension at least $\mathrm{cov}(\NN)$. 
\end{theorem}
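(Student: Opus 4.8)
The plan is to deduce Theorem~\ref{thm:RandomWalks} almost immediately from the machinery already assembled in this section, so the ``proof'' is really just a matter of chaining together Lemma~\ref{lem:basis} and Theorem~\ref{thm:Random1}. Let $X$ be a Banach space whose dual $X'$ contains $\ell^1$, and let $Z$ be a barrelled subspace of $X$. I want to show $\dim(Z) \geq \mathrm{cov}(\NN)$.

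First I would pick a Hamel basis $Y$ for $Z$, so that $Z = \mathrm{Span}(Y)$ and $|Y| = \dim(Z)$. Since $Z$ is barrelled, Lemma~\ref{lem:basis} tells us that $Y$ detects the norm-unboundedness of all functional sequences on $X$. Now apply Theorem~\ref{thm:Random1}: because $X'$ contains $\ell^1$ and $Y$ detects the norm-unboundedness of all functional sequences, we get $|Y| \geq \mathrm{cov}(\NN)$. Hence $\dim(Z) = |Y| \geq \mathrm{cov}(\NN)$, which is exactly the claim.

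Honestly, there is no real obstacle here — the theorem is a corollary of Theorem~\ref{thm:Random1}, and the only small point worth stating carefully is the passage from ``barrelled subspace $Z$'' to ``a spanning set $Y$ that detects norm-unboundedness'', which is precisely what Lemma~\ref{lem:basis} supplies once we take $Y$ to be any basis (or indeed any spanning subset) of $Z$. One could phrase the whole thing in a single sentence, but I would spell out the two invocations for the reader's convenience.

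\begin{proof}
Let $Z$ be a barrelled subspace of $X$, and let $Y$ be a Hamel basis for $Z$, so that $Z = \mathrm{Span}(Y)$ and $|Y| = \dim(Z)$. By Lemma~\ref{lem:basis}, $Y$ detects the norm-unboundedness of all functional sequences. Since $X'$ contains $\ell^1$, Theorem~\ref{thm:Random1} now gives $|Y| \geq \mathrm{cov}(\NN)$. Therefore $\dim(Z) = |Y| \geq \mathrm{cov}(\NN)$.
\end{proof}
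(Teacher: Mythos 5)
Your proof is correct and is essentially identical to the paper's: the paper also fixes a spanning set $Y \subseteq Z$ with $|Y| = \dim(Z)$, applies Lemma~\ref{lem:basis} to see that $Y$ detects the norm-unboundedness of all functional sequences, and concludes via Theorem~\ref{thm:Random1}. Nothing to add.
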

\begin{proof}
Suppose $Z$ is a barrelled subspace of $X$, and fix $Y \sub Z$ with $|Y| = \mathrm{dim}(Z)$ and $Z = \mathrm{Span}(Y)$. 
By Lemma~\ref{lem:basis}, $Y$ detects the norm-unboundedness of all functional sequences. 
By Theorem~\ref{thm:Random1}, this implies $\mathrm{dim}(Z) = |Y| \geq \mathrm{cov}(\NN)$. 
\end{proof}

\begin{corollary}\label{cor:Con1}
It is consistent with \zfc that no Banach space whose dual contains $\ell^1$ has a barrelled subspace with dimension $\bdd$. 
\end{corollary}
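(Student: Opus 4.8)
The plan is to derive Corollary~\ref{cor:Con1} purely as a cardinal-arithmetic consequence of Theorem~\ref{thm:RandomWalks}. That theorem is a \zfc statement: in every model of \zfc, a barrelled subspace of a Banach space whose dual contains $\ell^1$ has dimension $\geq \mathrm{cov}(\NN)$. So it is enough to pass to a model of \zfc in which the strict inequality $\bdd < \mathrm{cov}(\NN)$ holds; there, any barrelled subspace of such a Banach space has dimension $\geq \mathrm{cov}(\NN) > \bdd$, hence \emph{not} equal to $\bdd$, which is exactly the desired conclusion.

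Concretely, I would first recall --- as already noted in the introduction --- that $\bdd < \mathrm{cov}(\NN)$ is consistent with \zfc. The canonical witness is the random real model: starting from a model of \ch, force with the measure algebra of $2^{\aleph_2}$ (with product measure) to add $\aleph_2$ random reals. Random forcing is $\w^\w$-bounding, so the ground-model reals remain a dominating family, giving $\dom = \aleph_1$ and hence $\bdd = \aleph_1$ in the extension; on the other hand, adding $\aleph_2$ random reals forces $\mathrm{cov}(\NN) = \continuum = \aleph_2$. Thus $\bdd = \aleph_1 < \aleph_2 = \mathrm{cov}(\NN)$ in this model. (One could equally well just cite the existence of such a model from a standard reference without exhibiting it.)

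Second, I would fix any model of $\bdd < \mathrm{cov}(\NN)$, let $X$ be a Banach space whose dual contains $\ell^1$, and let $Z$ be a barrelled subspace of $X$. Theorem~\ref{thm:RandomWalks} gives $\mathrm{dim}(Z) \geq \mathrm{cov}(\NN) > \bdd$, so $\mathrm{dim}(Z) \neq \bdd$. Since $X$ and $Z$ were arbitrary, no Banach space whose dual contains $\ell^1$ has a barrelled subspace of dimension $\bdd$ in this model, completing the proof.

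I do not expect any real obstacle here: the genuine mathematical content has already been packaged in Theorem~\ref{thm:RandomWalks}, and what remains is the bookkeeping of which cardinal-characteristic configuration is consistent and which forcing produces it. The genuinely hard --- and currently open --- question lurking nearby, namely whether the hypothesis that the dual contains $\ell^1$ (or $c_0$, or some $\ell^p$) can be removed, which would yield the consistency of $\bdd < \barr$, is not addressed by this corollary.
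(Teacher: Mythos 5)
Your proposal is correct and matches the paper's own proof: both deduce the corollary from Theorem~\ref{thm:RandomWalks} together with the consistency of $\bdd < \mathrm{cov}(\NN)$, witnessed by the random real model. The extra detail you supply about how that model is constructed is fine but not needed; the paper simply cites the fact.
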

\begin{proof}
This follows from the previous corollary, together with the fact (well-known to set theorists) that the inequality $\bdd < \mathrm{cov}(\NN)$ is consistent with \zfc. For example, this inequality holds in the so-called ``random real model'' (see \cite{Blass}).
\end{proof}

Some well known Banach spaces $X$ with the property that $X'$ contains $\ell^1$ are:
\begin{itemize}
\item[$\circ$] $\ell^\infty$, or more generally, $\ell^\infty(\k)$ for any infinite cardinal $\k$, 
\item[$\circ$] $c_0$ and $c$, or more generally, $c_0(\k)$ and $c(\k)$ for any infinite cardinal $\k$, 
\item[$\circ$] $L^\infty(\Omega,\Sigma,\mu)$ for any non-atomic measure space $(\Omega,\Sigma,\mu)$, 
\item[$\circ$] the function space $C(K)$ for any infinite compact Hausdorff space $K$.
\end{itemize}

We provide a proof of this for $C(K)$, since this fact is relevant to the application of Theorem~\ref{thm:RandomWalks} to Nikodym Boolean algebras mentioned in the introduction. 

\begin{lemma}\label{lem:CK}
Let $K$ be an infinite compact Hausdorff space, and let $X = C(K)$. Then $X'$ contains an isometrically embedded copy of $\ell^1$. 
\end{lemma}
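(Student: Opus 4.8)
The plan is to use the fact that an infinite compact Hausdorff space $K$ contains a countably infinite discrete subset, and then build $\ell^1 \hookrightarrow C(K)'$ out of point masses at (a limit point of) that set. More precisely, first I would pick a sequence $\seq{k_n}{n \in \w}$ of distinct points in $K$; since $K$ is compact this sequence has a limit point $k_\infty$, and by passing to a subsequence I may assume $k_\infty \notin \set{k_n}{n \in \w}$ and that $k_\infty$ is not in the closure of any tail... actually what I really want is a sequence that is \emph{relatively discrete} and ``separated enough'' to run a partition-of-unity argument. The cleanest route: find a sequence $\seq{k_n}{n\in\w}$ of distinct points and pairwise disjoint open sets $U_n \ni k_n$. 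This is possible in any Hausdorff space with no isolated points near the $k_n$, but in general one uses that $K$ is infinite Hausdorff to extract an infinite discrete subset — a standard fact (every infinite Hausdorff space has an infinite discrete subset, in fact one whose points can be separated by pairwise disjoint open sets, sometimes called a \emph{discrete family}).

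Given such pairwise disjoint open sets $U_n$, I would define the embedding $\Phi: \ell^1 \to C(K)'$ on the standard basis by $\Phi(\be_n^0) = \dlt_{k_n}$, the evaluation functional $g \mapsto g(k_n)$, extended linearly and continuously: for $\bx = \seq{x_n}{n\in\w} \in \ell^1$, set $\Phi(\bx) = \sum_{n\in\w} x_n \dlt_{k_n}$, which converges in $C(K)'$ because $\norm{\dlt_{k_n}} = 1$ and $\sum_n |x_n| < \infty$. The map is clearly linear and bounded with $\norm{\Phi(\bx)} \leq \norm{\bx}_1$. The content is the reverse inequality $\norm{\Phi(\bx)} \geq \norm{\bx}_1$, giving an isometry. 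To see this, fix $\e > 0$, choose $N$ with $\sum_{n \geq N}|x_n| < \e$, and for each $n < N$ pick a sign $\s_n$ with $\s_n x_n = |x_n|$; using Urysohn's lemma on the disjoint open sets $U_0,\dots,U_{N-1}$, build $g \in C(K)$ with $g(k_n) = \s_n$ for $n < N$, $\norm{g}_\infty \leq 1$, and $g$ supported in $\bigcup_{n<N} U_n$ so that $g(k_n) = 0$ for $n \geq N$. Then $|\Phi(\bx)(g)| = |\sum_{n<N}|x_n|| \geq \norm{\bx}_1 - \e$, and letting $\e \to 0$ yields $\norm{\Phi(\bx)} \geq \norm{\bx}_1$. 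Injectivity is then immediate from isometry.

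The main obstacle — really the only nontrivial ingredient — is the topological extraction step: producing a sequence of distinct points of $K$ together with a pairwise disjoint family of open sets separating them. I would handle this by first invoking the standard fact that every infinite Hausdorff space has an infinite discrete (in the subspace topology) subset $D = \set{k_n}{n\in\w}$; then, for each $n$, since $k_n$ is isolated in $D$, there is an open $V_n \subseteq K$ with $V_n \cap D = \{k_n\}$, and one can shrink these to a pairwise disjoint family either directly (in the normal space $K$ one can separate finitely many points at a time and take an inductive refinement) or by noting that a discrete subspace of a compact Hausdorff (hence normal) space is $C^*$-embedded on finite pieces, which is all that is needed since the Urysohn construction above only ever used finitely many of the $U_n$ at once. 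So in fact I only need: for each finite $F \subseteq \w$, pairwise disjoint open sets $\set{U_n}{n \in F}$ with $k_n \in U_n$ — and that follows from Hausdorffness alone by a trivial induction on $|F|$. I would write the lemma's proof to use only this finite separation, which avoids any subtlety about infinite discrete families.

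\begin{proof}
Since $K$ is infinite and Hausdorff, it contains a sequence $\seq{k_n}{n \in \w}$ of distinct points. For each finite $F \sub \w$, a straightforward induction using the Hausdorff property produces pairwise disjoint open sets $\set{U_n^F}{n \in F}$ with $k_n \in U_n^F$ for each $n \in F$.

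Define $\Phi: \ell^1 \to X'$ as follows. For $n \in \w$, let $\dlt_n \in X'$ be the evaluation functional $\dlt_n(g) = g(k_n)$; note $\norm{\dlt_n} = 1$. For $\bx = \seq{x_n}{n \in \w} \in \ell^1$, the series $\sum_{n \in \w}x_n\dlt_n$ converges in $X'$ (since $\sum_n |x_n| < \infty$ and each $\norm{\dlt_n} = 1$), so we may set $\Phi(\bx) = \sum_{n \in \w}x_n\dlt_n$. Clearly $\Phi$ is linear and $\norm{\Phi(\bx)} \leq \sum_{n \in \w}|x_n| = \norm{\bx}_1$.

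For the reverse inequality, fix $\bx \in \ell^1$ and $\e > 0$, and choose $N \in \w$ with $\sum_{n \geq N}|x_n| < \e$. For each $n < N$ choose $\s_n \in \{-1,1\}$ with $\s_n x_n = |x_n|$. Let $F = \{0,1,\dots,N-1\}$, and fix pairwise disjoint open sets $U_0,\dots,U_{N-1}$ with $k_n \in U_n$ as above. Since $K$ is compact Hausdorff, hence normal, by Urysohn's lemma there is, for each $n < N$, a continuous $g_n: K \to [0,1]$ with $g_n(k_n) = 1$ and $g_n$ vanishing outside $U_n$. Put $g = \sum_{n < N}\s_n g_n$. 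The supports of the $g_n$ are disjoint, so $\norm{g}_\infty \leq 1$; also $g(k_n) = \s_n$ for $n < N$ and $g(k_m) = 0$ for $m \geq N$. Therefore
$$\textstyle \big| \Phi(\bx)(g) \big| \,=\, \big| \sum_{n < N}\s_n x_n \big| \,=\, \sum_{n < N}|x_n| \,\geq\, \norm{\bx}_1 - \e.$$
As $\norm{g}_\infty \leq 1$, this gives $\norm{\Phi(\bx)} \geq \norm{\bx}_1 - \e$, and letting $\e \to 0$ we conclude $\norm{\Phi(\bx)} = \norm{\bx}_1$. Thus $\Phi$ is an isometric embedding of $\ell^1$ into $X' = C(K)'$.
\end{proof}
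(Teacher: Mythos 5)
Your proof follows essentially the same route as the paper's: both embed $\ell^1$ into $C(K)'$ via point-evaluation functionals at a sequence of distinct points, get $\norm{\Phi(\bx)} \leq \norm{\bx}_1$ from the triangle inequality, and get the reverse bound by testing against a norm-one continuous function that realizes the signs of $x_0,\dots,x_{N-1}$ at the corresponding points. The only substantive difference is how that test function is built: the paper first extracts a \emph{relatively discrete} countable set $D$, defines the sign function on $\closure{D}$ (explicitly setting it to $0$ at every $d_m$ with $m \geq N$), and extends to $K$ by Tietze; you instead separate only the first $N$ points by pairwise disjoint open sets and sum Urysohn bumps supported in them.

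One step in your writeup is unjustified and false in general: the claim that $g(k_m) = 0$ for $m \geq N$. The sets $U_0,\dots,U_{N-1}$ are only required to be pairwise disjoint neighborhoods of $k_0,\dots,k_{N-1}$; nothing prevents some $U_n$ from containing infinitely many of the later points $k_m$, and this necessarily happens if some $k_n$ is a limit of the sequence (e.g.\ $K = \{0\}\cup\set{1/n}{n\in\N}$ with $k_0 = 0$). Avoiding exactly this is the point of the paper's relative-discreteness-plus-Tietze construction, which lets one \emph{prescribe} the value $0$ at all later points. The error is harmless in your argument, though: since $\norm{g}_\infty \leq 1$, the tail $\sum_{m\geq N}x_m g(k_m)$ has absolute value at most $\sum_{m\geq N}|x_m| < \e$, so your displayed equality should be weakened to $|\Phi(\bx)(g)| \geq \sum_{n<N}|x_n| - \e \geq \norm{\bx}_1 - 2\e$, which still yields the isometry as $\e \to 0$.
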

\begin{proof}
Let $K$ be an infinite compact Hausdorff space. 
Recall that a subset of $K$ is \emph{relatively discrete} if every point of $D$ is isolated in $D$, i.e., the subspace topology on $D$ is the discrete topology. 
Because $K$ is infinite and Hausdorff, there is an infinite relatively discrete $D = \set{d_i}{i \in \N} \sub K$. 
(This fact is folklore, but we include a short proof for completeness. 
First, we claim that if $H$ is any compact Hausdorff space, then there is an infinite subspace $H'$ with an isolated point. To see this, pick $x,y \in H$ with $x \neq y$ and disjoint open neighborhoods $U \ni x$ and $V \ni y$. Because $H$ is infinite, at least one of $U$ or $V$ has infinite complement, and therefore at least one of $\{x\} \cup (K \setminus U)$ or $\{y\} \cup (K \setminus V)$ is as required. Now apply this observation recursively, as follows. First find an infinite $H_1 \sub K$ with an isolated point $d_1$, then find an infinite $H_2 \sub H_1 \setminus \{d_1\}$ with an isolated point $d_2$, then an infinite $H_3 \sub H_2 \setminus \{d_2\}$ with an isolated point $d_3$, etc.) 

For each $i \in \N$, let $\be_i$ denote the functional on $C(K)$ that maps $f \in C(K)$ to $f(d_i)$: i.e., $\be_i$ is the evaluation functional at $d_i$. Then define $\phi: \ell^1 \to C(K)'$ by 
$$\textstyle \phi(\seq{a_i}{i \in \N}) \,=\, \sum_{i \in \N}a_i\be_i.$$
This map is clearly linear and, because $\sum_{i \in \N}a_i$ is absolutely convergent whenever $\seq{a_i}{i \in \N} \in \ell^1$, the right hand side converges to a functional in $C(K)'$. So we have a linear mapping $\ell^1 \to C(K)'$, and it is not difficult to see that this mapping is injective. We wish to show that $\norm{\phi(\bx)} = \norm{\bx}$ for all $\bx \in \ell^1$.  

Fix $\seq{a_i}{i \in \N} \in \ell^1$. In one direction, the triangle inequality gives
$$\textstyle \norm{\phi(\seq{a_i}{i \in \N})} \,=\, \bignorm{\sum_{i \in \N}a_i\be_i} \,\leq\, \sum_{i \in \N}\norm{a_i\be_i} \,=\, \sum_{i \in \N}|a_i| = \norm{\! \seq{a_i}{i \in \N} \!}.$$
For the other direction, fix any constant $c < 1$, and pick $N$ large enough that $\sum_{i \leq N}|a_i| > c\sum_{i \in \N}|a_i|$. Define $f: \closure{D} \to [-1,1]$ by setting 
$$f(x) \,=\, \begin{cases}
1 &\text{ if $x = d_i$ for some $i \leq N$ and $a_i \geq 0$,} \\
-1 &\text{ if $x = d_i$ for some $i \leq N$ and $a_i < 0$,} \\
0 &\text{ otherwise.}
\end{cases}$$
This function is continuous on $\closure{D}$, because it is nonzero at only finitely many isolated points in $\closure{D}$. By the Tietze Extension Theorem, there is a continuous function $\hat f: K \to [-1,1]$ such that $\hat f \rest \closure{D} = f$. Thus $\hat f \in C(K)$ and $\norm{\hat f} = 1$, and 
$$\textstyle \Big( \sum_{i \in \N}a_i\be_i \Big)\big( \hat f \,\big) \,=\, \sum_{i \leq N} a_i f(d_i) \,=\, \sum_{i \leq N}|a_i| \,>\, c\sum_{i \in \N}|a_i|.$$
Because $\norm{\hat f} = 1$, it follows that 
$$\textstyle \norm{\phi(\seq{a_i}{i \in \N})} \,=\, \bignorm{\sum_{i \in \N}a_i\be_i} \,>\, c\sum_{i \in \N}|a_i| \,=\, c\norm{\! \seq{a_i}{i \in \N} \!}.$$  
Because $c < 1$ was arbitrary, this shows $\norm{\phi(\seq{a_i}{i \in \N})} \geq \norm{\! \seq{a_i}{i \in \N} \!}$.
\end{proof}

A sequence of measure $\seq{\mu_m}{m \in \w}$ on a Boolean algebra $\AA$ is \emph{elementwise bounded} if $\sup_{m \in \w}|\mu_m(a)| < \infty$ for all $a \in \AA$, and it is \emph{uniformly bounded} if $\sup_{m \in \w}\norm{\mu_m} < \infty$. 
We say that $\AA$ has the \emph{Nikodym property} if every elementwise bounded sequence of measures on $\AA$ is uniformly bounded. 

\begin{theorem}\label{thm:Nik}
Every infinite Boolean algebra with the Nikodym property has size at least $\mathrm{cov}(\NN)$. 
\end{theorem}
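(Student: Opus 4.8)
The plan is to deduce this theorem from Theorem~\ref{thm:RandomWalks} by way of Schachermayer's observation and Lemma~\ref{lem:CK}. Let $\AA$ be an infinite Boolean algebra with the Nikodym property, and let $K = \mathrm{St}(\AA)$ be its Stone space. First I would observe that $K$ is an infinite compact Hausdorff space: it is compact Hausdorff by Stone duality, and it is infinite because a finite Boolean algebra has a finite Stone space, so an infinite $\AA$ forces $K$ to be infinite. By Lemma~\ref{lem:CK}, the dual of $C(K)$ then contains an isometrically embedded copy of $\ell^1$, so Theorem~\ref{thm:RandomWalks} applies with $X = C(K)$: every barrelled subspace of $C(K)$ has dimension at least $\mathrm{cov}(\NN)$.

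Next I would invoke Schachermayer's observation (from \cite{Sch}, recalled in the introduction): since $\AA$ has the Nikodym property, the subspace $Z = \mathrm{Span}\set{\chi_{[a]}}{a \in \AA}$ of $C(K)$ is barrelled. The equivalence runs through the Riesz representation theorem, which identifies the measures on $\AA$ with the functionals on $C(K)$ and matches elementwise (resp. uniform) boundedness of a sequence of measures with the condition $\sup_{m} |\by_m(\chi_{[a]})| < \infty$ for all $a \in \AA$ (resp. $\sup_m \norm{\by_m} < \infty$) for the corresponding functionals; the only point requiring care is that boundedness of a functional on each generator $\chi_{[a]}$ propagates, by linearity, to boundedness on all of $Z$, so that $\sup_m |\by_m(\bz)| < \infty$ for every $\bz \in Z$ is equivalent to $\sup_m |\by_m(\chi_{[a]})| < \infty$ for every $a \in \AA$. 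Applying the conclusion of the first paragraph to this $Z$ yields $\mathrm{dim}(Z) \geq \mathrm{cov}(\NN)$.

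Finally, since $Z$ is spanned by $\set{\chi_{[a]}}{a \in \AA}$, a set of cardinality at most $|\AA|$, we have $\mathrm{dim}(Z) \leq |\AA|$. Combining the two inequalities gives $|\AA| \geq \mathrm{dim}(Z) \geq \mathrm{cov}(\NN)$, which is the claim.

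I do not expect a genuine obstacle here: all of the substantive content sits in Theorem~\ref{thm:RandomWalks} (which in turn rests on the random-walk argument of Theorem~\ref{thm:Random1}) and in Lemma~\ref{lem:CK}, and Schachermayer's equivalence is cited rather than reproved. The only steps needing a sentence of justification are the infinitude of $\mathrm{St}(\AA)$ and the bound $\mathrm{dim}(Z) \leq |\AA|$, both routine.
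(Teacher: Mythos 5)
Your proposal is correct and follows essentially the same route as the paper: Schachermayer's observation converts the Nikodym property into barrelledness of $Z = \mathrm{Span}\set{\chi_{[a]}}{a \in \AA}$ inside $C(\mathrm{St}(\AA))$, and then Lemma~\ref{lem:CK} together with Theorem~\ref{thm:RandomWalks} gives $|\AA| \geq \mathrm{dim}(Z) \geq \mathrm{cov}(\NN)$. The extra remarks you supply (infinitude of the Stone space, the bound $\mathrm{dim}(Z) \leq |\AA|$) are routine and consistent with what the paper leaves implicit.
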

\begin{proof} 
Let $\AA$ be an infinite Boolean algebra, and let $\mathrm{St}(\AA)$ denote the Stone space of $\AA$. 
As Schachermayer observed in \cite{Sch}, $\AA$ has the Nikodym property if and only if the 
subspace $Z = \mathrm{Span}\set{\chi_{[a]}}{a \in \AA}$ of $C(\mathrm{St}(\AA))$ is barrelled. 
Here we denote by $[a]$ the clopen subspace of $\mathrm{St}(\AA)$ corresponding to an element $a \in \AA$. 
In particular, $\AA$ has the Nikodym property only if $C(\mathrm{St}(\AA)$ has a barrelled subspace with dimension $|\AA|$. 
But Theorem~\ref{thm:RandomWalks}, combined with Lemma~\ref{lem:CK}, implies that every barrelled subspace of $C(\mathrm{St}(\AA))$ has dimension at least $\mathrm{cov}(\NN)$. 
Thus if $\AA$ has the Nikodym property, $|\AA| \geq \mathrm{cov}(\NN)$. 
\end{proof}

We now move on to proving the second case of this sections main theorem: any Banach space whose dual contains $c_0$ or $\ell^p$ for some $p > 1$ has no barrelled subspace with dimension $<\mathrm{cov}(\NN)$. 
In fact, we prove this for the ostensibly broader class of Banach spaces having the following general property. 
Let us say a Banach space $X$ has property $(\dagger)$ if:
\begin{itemize}
\item[$(\dagger)$] There is a sequence $\seq{\be_i}{i \in \w}$ of unit functionals in $X'$, and a function $f: \w \to \w$, such that for every $\bx \in X$ with $\norm{\bx} = 1$, 
$$\big|\! \set{i \in \w}{|\be_i(\bx)| > \nicefrac{1}{m}} \!\big| \,<\, f(m)$$
for all sufficiently large values of $m$.
\end{itemize}
Before going on to how this property relates to barrelledness, let us first show that if $X'$ contains $c_0$ or $\ell^p$ for any $p > 1$, then $X$ has property $(\dagger)$.

\begin{proposition}\label{prop:containslp}
If $X$ is a Banach space whose dual space contains either $c_0$ or $\ell^p$ for some $p > 1$, then $X$ has property $(\dagger)$. 
\end{proposition}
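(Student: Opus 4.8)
The plan is to exploit the evaluation pairing between $X$ and $X'$. Every $\bx\in X$, acting on $X'$ by evaluation of functionals at $\bx$, restricts along the given embedding of $c_0$ or $\ell^p$ into $X'$ to a bounded linear functional on that small space; and since the dual of $c_0$ (respectively of $\ell^p$ with $p>1$) is $\ell^1$ (respectively $\ell^q$ with $\frac1p+\frac1q=1$, so $q<\infty$), the coordinates of this restricted functional are $q$-summable, with an $\ell^q$-bound that does not depend on $\bx$. Since a sequence in $\ell^q$ of fixed norm has only polynomially-in-$m$ many coordinates exceeding $\frac{1}{m}$, this is exactly what property $(\dagger)$ asks for.

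In detail, I would write $Z$ for whichever of $c_0$ or $\ell^p$ embeds into $X'$, and set $q=1$ in the first case and $q=\frac{p}{p-1}\in(1,\infty)$ in the second, so that in either case $Z'=\ell^q$ with $q\in[1,\infty)$; under this identification a functional $\psi\in Z'$ corresponds to the scalar sequence $\seq{\psi(\be_i^0)}{i\in\w}$, where $\be_i^0$ denotes the $i$th standard unit vector of $Z$, and $\sum_{i\in\w}|\psi(\be_i^0)|^q=\norm{\psi}^q$. By hypothesis there is an injective bounded linear map $\phi:Z\to X'$ together with a constant $b>0$ such that $\frac1b\norm{\bz}\leq\norm{\phi(\bz)}\leq b\norm{\bz}$ for all $\bz\in Z$; put $\be_i=\frac{1}{\norm{\phi(\be_i^0)}}\phi(\be_i^0)$, a unit functional in $X'$. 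The sequence $\seq{\be_i}{i\in\w}$ will witness $(\dagger)$.

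To verify this, fix an arbitrary $\bx\in X$ with $\norm{\bx}=1$ and define $\psi\in Z'$ by $\psi(\bz)=\big(\phi(\bz)\big)(\bx)$; then $\norm{\psi}\leq b$, since $|\psi(\bz)|\leq\norm{\phi(\bz)}\,\norm{\bx}\leq b\norm{\bz}$. Writing $a_i=\psi(\be_i^0)$, on the one hand $\sum_{i\in\w}|a_i|^q=\norm{\psi}^q\leq b^q$, and on the other hand $\be_i(\bx)=\frac{1}{\norm{\phi(\be_i^0)}}\psi(\be_i^0)$, so $|\be_i(\bx)|\leq b|a_i|$ because $\norm{\phi(\be_i^0)}\geq\frac1b$. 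Hence, for $m\geq1$, the condition $|\be_i(\bx)|>\nicefrac{1}{m}$ forces $|a_i|>\frac{1}{bm}$, so $|a_i|^q>(bm)^{-q}$, and summing over such $i$ gives
$$\big|\! \set{i\in\w}{|\be_i(\bx)|>\nicefrac{1}{m}} \!\big|\cdot(bm)^{-q}\;\leq\;\sum_{i\in\w}|a_i|^q\;\leq\;b^q,$$
whence $\big|\! \set{i\in\w}{|\be_i(\bx)|>\nicefrac{1}{m}} \!\big|\leq b^{2q}m^q$. Taking $f:\w\to\w$ to be $f(m)=\lfloor b^{2q}m^q\rfloor+1$, a function not depending on $\bx$, we obtain $\big|\! \set{i\in\w}{|\be_i(\bx)|>\nicefrac{1}{m}} \!\big|<f(m)$ for every $m$, hence in particular for all sufficiently large $m$, as required. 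There is no real obstacle here; the only point deserving a moment's care is the standard identification of $Z'$ with $\ell^q$ together with the fact that the coordinates of $\psi\in Z'$ are precisely the values $\psi(\be_i^0)$. (Note that the estimate in fact holds for all $m$, so the ``sufficiently large'' clause of $(\dagger)$ is met trivially in this instance.)
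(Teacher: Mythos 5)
Your proof is correct, and it constructs the same witnessing sequence as the paper (the normalized images $\be_i=\frac{1}{\norm{\phi(\be_i^0)}}\phi(\be_i^0)$ of the standard basis vectors) and arrives at the same bound $|F_m|<b^{2q}m^q$. The only real divergence is in how that cardinality estimate is obtained. The paper argues entirely inside $Z$ and $X'$: it introduces the sign pattern $\varsigma(i)$ of the values $\be_i(\bx)$, bounds $\frac{1}{m}|F_m|<\sum_{i\in F_m}|\be_i(\bx)|=\big(\sum_{i\in F_m}\varsigma(i)\be_i\big)(\bx)\leq\bignorm{\sum_{i\in F_m}\varsigma(i)\be_i}$, and then uses the embedding constants together with the explicit computation $\bignorm{\sum_{i\in F_m}\varsigma(i)\be_i^0}_{\ell^p}=|F_m|^{\nicefrac{1}{p}}$ (respectively $=1$ in $c_0$). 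You instead pass to the dual of $Z$: the functional $\psi(\bz)=\phi(\bz)(\bx)$ has norm at most $b$, its coordinate sequence lies in $\ell^q$ with $\norm{\cdot}_q\leq b$ by the standard identifications $(c_0)'\cong\ell^1$ and $(\ell^p)'\cong\ell^q$, and a Chebyshev-type count finishes the job. The two arguments are essentially dual to one another --- the paper's sign-vector estimate is in effect a self-contained proof of exactly the piece of that duality it needs --- so yours trades a small amount of extra machinery (the isometric identification of $Z'$ with a coordinate sequence space) for a cleaner, uniform treatment of the $c_0$ and $\ell^p$ cases and a slightly more transparent counting step. Both are complete; your observation that the bound holds for all $m$, not just sufficiently large $m$, also matches what the paper's estimate yields.
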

\begin{proof}
Let us consider first the case where $X'$ contains $\ell^p$. 
Fix $p > 1$, let $\phi: \ell^p \to X'$ be an injective bounded linear map, and fix $b > 0$ such that $\frac{1}{b}\norm{\bx} \leq \norm{\phi(\bx)} \leq b\norm{\bx}$ for all $\bx \in \ell^p$. 
For each $i \in \w$, let $\be_i^0$ be the sequence in $\ell^p$ with a $1$ in coordinate $i$ and a $0$ in every other coordinate, and let $\be_i = \frac{1}{\norm{\phi(\be^0_i)}} \phi(\be^0_i)$. 

Clearly $\seq{\be_i}{i \in \w}$ is a sequence of unit vectors in $X'$. 
Fix $\bx \in X$ with $\norm{\bx} = 1$, and define $\varsigma: \N \to \{-1,1\}$ by setting 
$\varsigma(i) = 1$ if $\be_i(\bx) \geq 0$ and $\varsigma(i) = -1$ if $\be_i(\bx) < 0$ (i.e., $\varsigma(i)$ indicates the sign of $\be_i(\bx)$). Given $m \in \w$, let 
$$F_m \,=\, \set{i \in \w}{|\be_i(\bx)| > \nicefrac{1}{m}},$$
and observe that 
\begin{align*}
\textstyle \frac{1}{m}|F_m| &\,<\, \textstyle \sum_{i \in F_m}|\be_i(\bx)| \,=\, \sum_{i \in F_m} \varsigma(i)\be_i(\bx) \,\leq\, \bignorm{ \sum_{i \in F_m}\varsigma(i)\be_i} \\
&\,\leq\, \textstyle \bignorm{ \sum_{i \in F_m}\frac{1}{\norm{\phi(\be_i^0)}}\varsigma(i)\phi(\be_i^0)} \,\leq\, b \bignorm{ \sum_{i \in F_m}\varsigma(i)\phi(\be_i^0)} \\ 
&\,\leq\, \textstyle b \bignorm{ \phi\big( \sum_{i \in F_m}\varsigma(i)\be_i^0 \big)} \,\leq\, \textstyle b^2 \bignorm{ \sum_{i \in F_m}\varsigma(i)\be_i^0 } \,=\, b^2|F_m|^{\nicefrac{1}{p}}.
\end{align*}
Consequently, $|F_m|^{1-\frac{1}{p}} < mb^2$ or, setting $\frac{1}{q} = 1 - \frac{1}{p}$ as usual, $|F_m| < m^q b^{2q}$. In particular, taking $f(m)$ to be any integer $> \! m^q b^{2q}$ for every $m$, $f$ witnesses $(\dagger)$.

The argument for $c_0$ is almost exactly the same, but with one simplification. In the final equality in the displayed formulas in the second paragraph, we have $b^2 \bignorm{ \sum_{i \in F_m}\varsigma(i)\be_i^0 } = b^2$ rather than $b^2 \bignorm{ \sum_{i \in F_m}\varsigma(i)\be_i^0 } = b^2|F_m|^{\nicefrac{1}{p}}$. Consequently, we get $|F_m| < mb^2$, and thus taking $f(m)$ to be any integer $> \! m b^2$ for every $m$ gives a witness to $(\dagger)$ in this case.
\end{proof}

\begin{lemma}\label{lem:dagger}
Suppose $X$ is a Banach space with property $(\dagger)$. Then there is a sequence $\seq{\be_i}{i \in \w}$ of unit functionals in $X'$, and there is a partition of $\w$ into finite intervals $I_1,I_2,I_3,\dots$, each of size at least $2$, such that 
$$\sum_{m=1}^\infty \frac{\big|\! \set{i \in I_m}{|\be_i(\bx)| \geq \nicefrac{1}{m}} \!\big|}{|I_m|} \,<\, \infty$$
for every $\bx \in X$ with $\norm{\bx} = 1$. 
\end{lemma}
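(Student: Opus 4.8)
The plan is to reuse the very sequence $\seq{\be_i}{i \in \w}$ of unit functionals and the function $f \colon \w \to \w$ furnished by property $(\dagger)$, and to construct the partition $I_1, I_2, I_3, \dots$ explicitly, choosing the $m$th interval long enough that the number of indices $i \in I_m$ with $|\be_i(\bx)| \geq \nicefrac{1}{m}$ is dwarfed by $|I_m|$. No new functionals are needed; the analytic content has already been extracted (it is exactly what $(\dagger)$ provides, and Proposition~\ref{prop:containslp} shows the spaces of interest enjoy it), so what is left is a bookkeeping argument.

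First I would isolate the consequence of $(\dagger)$ that will be used. Since $|\be_i(\bx)| \geq \nicefrac{1}{m}$ forces $|\be_i(\bx)| > \nicefrac{1}{m+1}$, we have $\set{i \in \w}{|\be_i(\bx)| \geq \nicefrac{1}{m}} \subseteq \set{i \in \w}{|\be_i(\bx)| > \nicefrac{1}{m+1}}$ for every $\bx \in X$ and every $m \geq 1$. Putting $g(m) = \max\{1, f(m+1)\}$, property $(\dagger)$ then yields: for each $\bx \in X$ with $\norm{\bx} = 1$ there is $m_0 = m_0(\bx) \in \w$ such that $\big| \set{i \in \w}{|\be_i(\bx)| \geq \nicefrac{1}{m}} \big| < g(m)$ for all $m \geq m_0$.

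Next I would choose the interval lengths. Set $\ell_m = \max\{2,\ g(m)\,2^m\}$ for $m \geq 1$ and let $I_m$ be the $m$th consecutive block of natural numbers of length $\ell_m$; that is, $I_1 = \{0,1,\dots,\ell_1-1\}$ and, inductively, $I_{m+1}$ is the block of length $\ell_{m+1}$ immediately following $I_m$. Each $\ell_m$ is a finite integer $\geq 2$, so the $I_m$ form a partition of $\w$ into finite intervals each of size at least $2$, as the lemma requires.

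Finally I would verify summability. Fix a unit vector $\bx$, with associated $m_0 = m_0(\bx)$. For $m < m_0$ use the trivial bound $\frac{|\set{i \in I_m}{|\be_i(\bx)| \geq \nicefrac{1}{m}}|}{|I_m|} \leq 1$; there are fewer than $m_0$ such terms. For $m \geq m_0$ the numerator is at most $\big| \set{i \in \w}{|\be_i(\bx)| \geq \nicefrac{1}{m}} \big| < g(m)$, so that term is smaller than $\frac{g(m)}{g(m)\,2^m} = 2^{-m}$. Hence
$$\sum_{m=1}^{\infty} \frac{\big| \set{i \in I_m}{|\be_i(\bx)| \geq \nicefrac{1}{m}} \big|}{|I_m|} \;<\; m_0 + \sum_{m \geq m_0} 2^{-m} \;\leq\; m_0 + 1 \;<\; \infty,$$
and since $\bx$ was an arbitrary unit vector this finishes the proof. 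I do not expect a genuine obstacle here: the only subtle point is that the threshold $m_0$ supplied by $(\dagger)$ depends on $\bx$, but this is harmless, since it governs only finitely many terms of the series (each at most $1$), while every later term is crushed by the geometric factor $2^{-m}$ that was deliberately built into $\ell_m$.
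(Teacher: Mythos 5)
Your proof is correct and follows essentially the same route as the paper: keep the functionals from $(\dagger)$ and pad the intervals so that the $(\dagger)$ bound on the numerator is crushed by $|I_m|$ (the paper takes $|I_m| > m^2 f(m)$ to get terms $< \nicefrac{1}{m^2}$ where you take $|I_m| = g(m)2^m$ to get $2^{-m}$). Your explicit handling of the mismatch between the strict inequality $> \nicefrac{1}{m}$ in $(\dagger)$ and the $\geq \nicefrac{1}{m}$ in the lemma, via $g(m) = \max\{1, f(m+1)\}$, is a small point the paper passes over silently, and it is a welcome addition.
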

\begin{proof}
Let $X$ be a Banach space with property $(\dagger)$, and fix a sequence $\seq{\be_i}{i \in \w}$ of unit functionals in $X'$, and a function $f: \w \to \w$, witnessing $(\dagger)$ for $X$. 
Observe that if $g$ is any function $\w \to \w$ strictly larger than $f$, then $\seq{\be_i}{i \in \w}$ and $g$ also witness $(\dagger)$ for $X$ (i.e., increasing $f$ does no harm). 
Thus, without loss of generality, we may (and do) assume that $f$ is non-decreasing, and that $f(m) \geq 2$ for all $m$.

Let $I_1,I_2,I_3,\dots$ be a partition of $\N$ into adjacent intervals (i.e., $\min I_1 = 1$ and $\max I_m = \min I_{m+1}-1$ for all $m$) such that $|I_m| > m^2f(m)$ for every $m$. 
By $(\dagger)$, if $\bx \in X$ and $\norm{\bx} = 1$ then
$$|\set{i \in I_m}{|\be_i(\bx)| \geq \nicefrac{1}{m}}| \,\leq\, |\set{i \in \w}{|\be_i(\bx)| \geq \nicefrac{1}{m}}| \,<\, f(m)$$
for all sufficiently large values of $m$. 
Consequently, $\frac{ | \set{i \in I_m}{|\be_i(\bx)| \geq \nicefrac{1}{m}} | }{|I_m|} < \nicefrac{1}{m^2}$ for all sufficiently large values of $m$. 
Because $\sum_{m=1}^\infty \nicefrac{1}{m^2} < \infty$, this implies that $\sum_{m=1}^\infty \frac{ | \set{i \in I_m}{|\be_i(\bx)| \geq \nicefrac{1}{m}} | }{|I_m|} < \infty$ as well. 
\end{proof}

\begin{theorem}\label{thm:Random0}
Suppose $X$ is a Banach space with property $(\dagger)$. 
If $Y \sub X$ detects the norm-unboundedness of all functional sequences, then $|Y| \geq \mathrm{cov}(\NN)$. 
\end{theorem}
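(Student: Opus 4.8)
The plan is to mirror the proof of Theorem~\ref{thm:Random1}, replacing the ``random walk'' by a ``random selection.'' First I would apply Lemma~\ref{lem:dagger} to obtain a sequence $\seq{\be_i}{i \in \w}$ of unit functionals in $X'$ together with a partition of $\w$ into finite intervals $I_1, I_2, I_3, \dots$, each of size at least $2$, such that
$$\sum_{m=1}^\infty \frac{\big|\! \set{i \in I_m}{|\be_i(\bx)| \geq \nicefrac{1}{m}} \!\big|}{|I_m|} \,<\, \infty$$
for every $\bx \in X$ with $\norm{\bx} = 1$. As the probability space I would take $\Omega = \prod_{m=1}^\infty I_m$, equipped with the product $\mu$ of the uniform probability measures on the finite factors. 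Since every $|I_m| \geq 2$, the measure $\mu$ is an atomless Borel probability measure on a compact metrizable space, hence Borel-isomorphic to Lebesgue measure on $[0,1]$ by \cite[Theorem 17.41]{Kechris}; in particular, no family of fewer than $\mathrm{cov}(\NN)$ many $\mu$-null sets covers $\Omega$.

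To each $\w = \seq{i_m}{m \in \N} \in \Omega$ I would associate the sequence $\seq{\by_m^\w}{m \in \w}$ in $X'$ given by $\by_m^\w = m\,\be_{i_m}$. Because each $\be_i$ is a unit functional, $\norm{\by_m^\w} = m$, so $\seq{\by_m^\w}{m \in \w}$ is norm-unbounded for \emph{every} $\w \in \Omega$. The crux of the argument is the analogue of the Claim in Theorem~\ref{thm:Random1}: for each fixed $\bx \in X$,
$$P\big( \bx \text{ detects the norm-unboundedness of } \seq{\by_m^\w}{m \in \w} \big) \,=\, 0.$$
To prove this I would reduce to the case $\norm{\bx} = 1$ (the cases $\bx = 0$ and general $\bx \neq 0$ following by scaling, exactly as in Theorem~\ref{thm:Random1}), and set $E_m = \set{\w \in \Omega}{|\by_m^\w(\bx)| \geq 1} = \set{\w \in \Omega}{|\be_{\w(m)}(\bx)| \geq \nicefrac{1}{m}}$. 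By construction $P(E_m) = \frac{| \set{i \in I_m}{|\be_i(\bx)| \geq \nicefrac{1}{m}} |}{|I_m|}$, so Lemma~\ref{lem:dagger} gives $\sum_m P(E_m) < \infty$, and the first Borel--Cantelli lemma yields $P(\limsup_m E_m) = 0$. Thus almost every $\w$ lies in only finitely many $E_m$; for such $\w$ we have $|\by_m^\w(\bx)| < 1$ for all sufficiently large $m$, while $|\by_m^\w(\bx)| = m|\be_{\w(m)}(\bx)| \leq m$ for each of the finitely many remaining $m$. Hence $\sup_{m \in \w}|\by_m^\w(\bx)| < \infty$ almost surely, which is the claim.

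The theorem then follows by the covering argument of Theorem~\ref{thm:Random1}: for any $Y \sub X$, the set of $\w \in \Omega$ for which some $\bx \in Y$ detects the norm-unboundedness of $\seq{\by_m^\w}{m \in \w}$ is the union, over $\bx \in Y$, of the $\mu$-null sets provided by the claim. If $|Y| < \mathrm{cov}(\NN)$ this union is a proper subset of $\Omega$, so there is some $\w$ such that no $\bx \in Y$ detects the (norm-unbounded) sequence $\seq{\by_m^\w}{m \in \w}$; hence $Y$ does not detect the norm-unboundedness of all functional sequences. Contrapositively, if $Y$ detects all of them then $|Y| \geq \mathrm{cov}(\NN)$.

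I do not expect a serious obstacle beyond getting the bookkeeping right. The one point that genuinely needs care is that $\mu$ be atomless, so that $\mathrm{cov}(\NN)$ is the correct cardinal for covering $\Omega$ by null sets; this is precisely why Lemma~\ref{lem:dagger} was arranged so that each interval $I_m$ has size at least $2$. Everything else is a transcription of the corresponding steps in the proof of Theorem~\ref{thm:Random1}, with Borel--Cantelli playing the role that additivity of variance and Chebyshev's inequality played there.
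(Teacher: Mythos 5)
Your proposal is correct and takes essentially the same approach as the paper: the same probability space $\prod_{m}I_m$ with the product of uniform measures, the same assignment $f \mapsto \seq{m\be_{f(m)}}{m\in\w}$, and the same null-set covering argument at the end. The only cosmetic difference is that you invoke the first Borel--Cantelli lemma where the paper carries out the equivalent tail-sum estimate $\sum_{m\geq N}\nicefrac{|F_m|}{|I_m|}<\e$ by hand.
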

\begin{proof}
Suppose $X$ is a Banach space with property $(\dagger)$. 
Fix a sequence $\seq{\be_i}{i \in \w}$ of functionals in $X'$, and a sequence $I_1,I_2,I_3,\dots$ of intervals partitioning $\w$, that witness the conclusion of Lemma~\ref{lem:dagger} for $X$. 

The basic idea of the proof is the same as for Theorem~\ref{thm:Random1}: we must define a probability space $\Omega$, and an injective mapping from $\Omega$ to norm-unbounded sequences in $X'$, in order to make sense of the notion of a ``random'' norm-unbounded sequence.  

Let $\Omega = \prod_{m \in \w}I_m$, the space of all functions $f: \w \to \w$ such that $f(m) \in I_m$ for all $m$. 
In other words, a point of $\Omega$ selects a single member of each $I_m$. To make $\Omega$ a probability space, simply consider these selections to happen ``randomly'' with respect to the uniform distribution on each $I_m$. 
More formally, for each $m$ let $\mu_m$ denote the uniform probability measure on $I_m$, and then $\mu = \bigotimes_{m \in \w}\mu_m$ is the desired probability measure on $\Omega$. 
To emphasize the probabilistic viewpoint, let 
$$P\big( \varphi(x) \big) \,=\, \mu \big(\! \set{f \in \Omega}{\varphi(f)} \!\big)$$
whenever $\varphi(x)$ is some formula/property defining a subset of $\Omega$. 
The relevant properties of $\mu$ are:
\begin{itemize}
\item[$\circ$] If $F$ is a finite subset of $I_m$, then
$$P\big( f(m) \in F \big) \,=\,\textstyle \nicefrac{|F|}{|I_m|}.$$
This follows from our definition of $\mu$, which uses the uniform probability distribution $\mu_m$ on $I_m$.
\item[$\circ$] As a probability measure, $\mu$ is countably (sub)additive. In particular, 
$$P\big( \varphi_m(f) \text{ holds for some $m \in A$} \big) \,\leq\,\textstyle \sum_{m \in A}P\big( \varphi_m(f) \big)$$
whenever $A \sub \N$ and $\varphi_m(x)$ is some formula/propety defining a subset of $\Omega$ for every $m \in A$.
\item[$\circ$] The measure space $(\Omega,\mu)$ is an atomless probability Borel measure on a compact metrizable space. (That the measure $\mu$ is atomless follows from our assumption in Lemma~\ref{lem:dagger} that $|I_m| \geq 2$ for all $m$.) All such measure spaces are Borel-isomorphic to the Lebesgue measure on the unit interval $[0,1]$ \cite[Theorem 17.41]{Kechris}. In particular, it follows that if $\F$ is a family of $\mu$-null subsets of $\Omega$ and $|\F| < \mathrm{cov}(\NN)$, then $\bigcup \F \not= \Omega$.
\end{itemize}

To each $f \in \Omega$ associate a sequence $\seq{\by^f_m}{m \in \w}$ in $B'$ as follows:
$$\by^f_m \,=\, m\be_{f(m)}.$$
Observe that $\norm{\by_m^f} = m\norm{\be_{f(m)}} = m$ for every $f \in \Omega$, hence $\sup_{m \in \w}\norm{\by^f_m}  = \infty$. 
Thus, as promised, the map $f \mapsto \seq{\by^f_m}{m \in \w}$ gives us a way of associating the members of our probability space $\Omega$ to norm-unbounded sequences in $X'$. 

\begin{claim}
For any given $\bx \in \ell^1$, 
$$P \big( \bx \text{ detects the norm-unboundedness of }\seq{\by^f_m}{m \in \w} \!\big) \,=\, 0.$$
\end{claim}

\noindent \emph{Proof of the claim:} 
Fix $\bx \in X$, and for convenience, for each $m \in \w$ let us define $F_m = \set{i \in I_m}{|\be_i(\bx)| \geq \nicefrac{\norm{\bx}}{m}}$. 
By the first property of $\mu$ listed above, 
$$P \big( |\by^f_m(\bx)| \geq \norm{\bx} \big) \,=\, P \big( |\be_{f(m)}(\bx)| \geq \nicefrac{\norm{\bx}}{m} \big) \,=\, \nicefrac{|F_m|}{|I_m|}$$
for all $m \in \w$. 

Letting $\hat \bx = \nicefrac{\bx}{\norm{\bx}}$, observe that 
$F_m = \set{i \in I_m}{|\be_i(\hat \bx)| \geq \nicefrac{1}{m}}$. 
Because $\norm{\hat \bx} = 1$, and because $\seq{\be_i}{i \in \w}$ and $I_1,I_2,I_3,\dots$ witness the conclusion of Lemma~\ref{lem:dagger} for $X$, this implies $\sum_{m \in \w} \nicefrac{|F_m|}{|I_m|} < \infty$.

Let $\e > 0$. 
Because $\sum_{m \in \w} \nicefrac{|F_m|}{|I_m|}$ converges, there is some $N \in \N$ such that $\sum_{m \geq N} \nicefrac{|F_m|}{|I_m|} < \e$. 
Observe that $|\by^f_m(\bx)| \leq \norm{\by^f_m}\norm{\bx} = m\norm{\bx}$ whenever $m < N$. 
Combining this with the observations above and the countable (sub)additivity of our probability measure $\mu$, 
\begin{align*}
\textstyle P \big( \sup_{m \in \w}|\by_m^f(\bx)| = \infty \big) &\,\leq\, P \big( \textstyle \sup_{m \in \w}|\by_m^f(\bx)| > N\norm{x} \big) \\
&\,\leq\,\textstyle P \big( |\by_m^f(\bx)| > \norm{\bx} \text{ for some } m \geq N \big) \\
&\,\leq\, \textstyle \sum_{m \geq N} P \big( |\by_m^f(\bx)| \geq \norm{\bx} \big) \\
&\,=\, \textstyle \sum_{m \geq N} \nicefrac{|F_m|}{|I_m|} \\ &\,<\, \e.
\end{align*}
As $\e$ was arbitrary, this implies $P \big( \sup_{m \in \w}|\by_m^f(\bx)| = \infty \big) = 0$.
\hfill $\dashv$

\vspace{2.5mm}

To finish the proof of the theorem, let $Y \sub X$. Observe that
\begin{align*}
&\set{f \in \Omega}{ \text{$Y$ detects the norm-unboundedness of }\seq{y^f_m}{m \in \w} } \\
& \qquad \,=\, \textstyle \bigcup_{\bx \in Y} \set{f \in \Omega}{ \text{$\bx$ detects the norm-unboundedness of }\seq{y^f_m}{m \in \w} },
\end{align*}
and by the claim above, this is a size-$|Y|$ union of $\mu$-null sets. If $|Y| < \mathrm{cov}(\NN)$, then by the third property of $\mu$ listed above, 
$$\textstyle \bigcup_{\bx \in Y} \set{f \in \Omega}{ \text{$\bx$ detects the norm-unboundedness of }\seq{y^f_m}{m \in \w} } \,\neq\, \Omega.$$ 
In particular, if $|Y| < \mathrm{cov}(\NN)$ then $Y$ does not detect the norm-unboundedness of every sequence of the form $\seq{y^f_m}{m \in \w}$, and so $Y$ does not detect the norm-unboundedness of all functional sequences.
\end{proof}

\begin{corollary}\label{cor:Random0}
If $X$ is a Banach space with property $(\dagger)$, then every barrelled subspace of $X$ has dimension at least $\mathrm{cov}(\NN)$. 
Moreover, it is consistent with \zfc that no Banach space with property $(\dagger)$ has a barrelled subspace of dimension $\bdd$. 
\end{corollary}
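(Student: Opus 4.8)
The plan is to mirror, almost verbatim, the pattern used to derive Theorem~\ref{thm:RandomWalks} and Corollary~\ref{cor:Con1} from Theorem~\ref{thm:Random1}, substituting Theorem~\ref{thm:Random0} for Theorem~\ref{thm:Random1}. For the first assertion, I would start with an arbitrary barrelled subspace $Z$ of a Banach space $X$ having property $(\dagger)$, and choose a set $Y \sub Z$ that spans $Z$ and has $|Y| = \mathrm{dim}(Z)$ (any Hamel basis of $Z$ works). Since $Z$ is barrelled, it detects the norm-unboundedness of all functional sequences, so by Lemma~\ref{lem:basis} the spanning set $Y$ does too. Then Theorem~\ref{thm:Random0} applies directly to $Y$ and gives $|Y| \geq \mathrm{cov}(\NN)$, i.e.\ $\mathrm{dim}(Z) \geq \mathrm{cov}(\NN)$. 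That is the entire argument for the first sentence; no new technical input is required beyond the three results just cited.

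For the ``moreover'' clause, I would combine the first assertion with the set-theoretic fact (standard in this area, and the same one invoked in Corollary~\ref{cor:Con1}) that $\bdd < \mathrm{cov}(\NN)$ is consistent with \zfc --- for instance, this strict inequality holds in the random real model (see \cite{Blass}). In any model where $\bdd < \mathrm{cov}(\NN)$, the first part shows that a barrelled subspace of a space with property $(\dagger)$ must have dimension at least $\mathrm{cov}(\NN) > \bdd$, hence cannot have dimension exactly $\bdd$; this establishes the consistency statement.

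I do not anticipate a genuine obstacle here: all the real work has already been carried out in Proposition~\ref{prop:containslp}, Lemma~\ref{lem:dagger}, and Theorem~\ref{thm:Random0}, and the corollary is a routine repackaging of those results in the same style as Theorem~\ref{thm:RandomWalks} and Corollary~\ref{cor:Con1}. The only point worth stating carefully is the reduction from ``$Z$ is barrelled'' to ``a spanning set of $Z$ detects norm-unboundedness,'' but that is precisely Lemma~\ref{lem:basis}, so it costs nothing. If anything, the mild subtlety is purely expository --- deciding how much of the argument to spell out versus how much to quote as ``identical to the proof of Theorem~\ref{thm:RandomWalks}'' --- and I would opt to give the short argument in full since it is only a few lines.
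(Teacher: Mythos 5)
Your proposal is correct and matches the paper's argument exactly: the paper also derives this corollary by repeating the proofs of Theorem~\ref{thm:RandomWalks} and Corollary~\ref{cor:Con1} with Theorem~\ref{thm:Random0} substituted for Theorem~\ref{thm:Random1}, using Lemma~\ref{lem:basis} for the spanning-set reduction and the consistency of $\bdd < \mathrm{cov}(\NN)$ for the ``moreover'' clause.
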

\begin{proof}
This is proved in exactly the same way as Theorem~\ref{thm:RandomWalks} and Corollary~\ref{cor:Con1}, but using Theorem~\ref{thm:Random0} in the place of Theorem~\ref{thm:Random1}. 
\end{proof}

The following theorem summarizes the results of this section:

\begin{theorem}\label{thm:MainCor}
Suppose $X$ is a Banach space whose dual contains either $c_0$ or $\ell^p$ for some $p \geq 1$. 
Then every barrelled subspace of $X$ has dimension at least $\mathrm{cov}(\NN)$. 
Moreover, it is consistent with \zfc that no such Banach space has a barrelled subspace of dimension $\bdd$. 
\end{theorem}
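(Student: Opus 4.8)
The plan is to recognize that this theorem is simply a repackaging of the two cases already established, so the proof amounts to a case split followed by an appeal to the known consistency of $\bdd < \mathrm{cov}(\NN)$. First I would split the hypothesis ``$X'$ contains $c_0$ or $\ell^p$ for some $p \geq 1$'' into the two exhaustive subcases: (i) $X'$ contains $\ell^1$ (the case $p = 1$), and (ii) $X'$ contains $c_0$, or contains $\ell^p$ for some $p > 1$.

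In subcase (i), Theorem~\ref{thm:RandomWalks} applies verbatim and gives that every barrelled subspace of $X$ has dimension at least $\mathrm{cov}(\NN)$. In subcase (ii), Proposition~\ref{prop:containslp} shows that $X$ has property $(\dagger)$, and then the first sentence of Corollary~\ref{cor:Random0} yields exactly the same conclusion. Since (i) and (ii) exhaust the hypothesis and deliver the same bound, the first assertion of the theorem follows immediately.

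For the ``moreover'' clause I would invoke the set-theoretic fact already used in Corollary~\ref{cor:Con1}: it is consistent with \zfc that $\bdd < \mathrm{cov}(\NN)$ (for instance this holds in the random real model). Working inside any model of \zfc in which $\bdd < \mathrm{cov}(\NN)$, the first part of the theorem tells us that every barrelled subspace of a Banach space $X$ of the stated type has dimension $\geq \mathrm{cov}(\NN) > \bdd$; in particular no such space has a barrelled subspace of dimension exactly $\bdd$. This is the desired consistency statement.

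I do not anticipate a real obstacle here, since all of the analytic and probabilistic content has already been carried out: the random-walk construction in $X'$ for the $\ell^1$ case (Theorem~\ref{thm:Random1}), the ``random selector'' argument driven by property $(\dagger)$ (Theorem~\ref{thm:Random0}), and the extraction of $(\dagger)$ from a copy of $c_0$ or $\ell^p$ with $p>1$ in the dual (Proposition~\ref{prop:containslp}). The only points needing any care are trivial: that the dichotomy $p=1$ versus ($c_0$ or $p>1$) is genuinely exhaustive, and that both branches produce the \emph{same} cardinal $\mathrm{cov}(\NN)$ as the lower bound. Both hold, so the summary theorem is indeed just a corollary of what precedes it.
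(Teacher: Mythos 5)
Your proposal is correct and matches the paper's proof, which likewise derives the theorem immediately from Proposition~\ref{prop:containslp}, Theorem~\ref{thm:RandomWalks}, and Corollaries~\ref{cor:Con1} and~\ref{cor:Random0} via the same case split and the consistency of $\bdd < \mathrm{cov}(\NN)$. No issues.
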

\begin{proof}
This follows immediately from Proposition~\ref{prop:containslp} together with Theorem~\ref{thm:RandomWalks} and Corollaries~\ref{cor:Con1}, and \ref{cor:Random0}.
\end{proof}

We end with two questions raised by the results in this section. 
First, observe that our results apply to an ostensibly larger class of Banach spaces than just those whose dual does not contain $c_0$ or $\ell^p$ for any $p \geq 1$. Theorem~\ref{thm:MainCor} really holds for all Banach spaces satisfying the conclusion of Lemma~\ref{lem:containsl1} or having property $(\dagger)$. Using ideas from \cite{Rosenthal}, it seems clear that $X'$ contains $\ell^1$ if and only if $X$ satisfies the conclusion of Lemma~\ref{lem:containsl1}. But it is not clear that property $(\dagger)$ is equivalent to $X'$ containing $c_0$ or $\ell^p$ for some $p > 1$.

\begin{question}\label{q:BadSpace}
Does every Banach space either have property $(\dagger)$ or else have $\ell^1$ contained in its dual? 
\end{question}

A positive answer to this question would mean that the conclusions of Theorem~\ref{thm:MainCor} apply to all Banach spaces, which would imply $\mathrm{cov}(\NN) \leq \barr$, and the consistency of $\bdd < \barr$. However, while we suspect the answer to Question~\ref{q:BadSpace} is negative, we also imagine there could be other roads to proving $\mathrm{cov}(\NN) \leq \barr$.

\begin{question}
Is $\mathrm{cov}(\NN) \leq \barr$? 
\end{question}

\begin{question}
Is $\barr$ equal to any previously studied cardinal characteristic of the continuum, such as $\mathrm{cov}(\NN)$, $\mathrm{non}(\M)$, or $\bdd$? 
\end{question}

\end{document}